\theoremstyle{plain}
\newtheorem{theorem}{{Theorem}}
\newtheorem{lemma}[theorem]{Lemma}
\newtheorem{proposition}[theorem]{Proposition}
\newtheorem{corollary}[theorem]{Corollary}
\newtheorem{notation}[theorem]{Notation}
\newtheorem{remark}[theorem]{Remark}
\newtheorem{definition}[theorem]{Definition}
\newtheorem{hypothesis}{Hypothesis}
\numberwithin{equation}{subsection}
\newcommand*\colvec[1]{
	\global\colveccount#1
	\begin{pmatrix}
		\colvecnext
	}
	\def\colvecnext#1{
		#1
		\global\advance\colveccount-1
		\ifnum\colveccount>0
		\\
		\expandafter\colvecnext
		\else
	\end{pmatrix}
	\fi
}
\newcommand{\sA}{\mathsf{A}}
\newcommand{\sP}{\mathsf{P}}
\newcommand{\sK}{\mathsf{K}}
\newcommand{\sZ}{\mathsf{Z}}
\newcommand{\sX}{\mathsf{X}}
\newcommand{\sG}{\mathsf{G}}
\newcommand{\sV}{\mathsf{V}}
\newcommand{\sM}{\mathsf{M}}
\newcommand{\sN}{\mathsf{N}}
\newcommand{\sD}{\mathsf{D}}
\newcommand{\tA}{\mathtt{A}}
\newcommand{\tAd}{\mathtt{Ad}}
\newcommand{\tad}{\mathtt{ad}}
\newcommand{\tAf}{\mathtt{Af}}
\newcommand{\tC}{\mathtt{C}}
\newcommand{\tB}{\mathtt{B}}
\newcommand{\tL}{\mathtt{L}}
\newcommand{\tT}{\mathtt{T}}
\newcommand{\tI}{\mathtt{I}}
\newcommand{\tR}{\mathtt{R}}
\newcommand{\tdR}{{\mathtt{dR}}}
\newcommand{\tJd}{\mathtt{Jd}}
\newcommand{\tM}{\mathtt{M}}
\newcommand{\tP}{\mathtt{P}}
\newcommand{\tth}{\mathtt{h}}
\newcommand{\te}{\mathtt{e}}
\newcommand{\tu}{\mathtt{u}}
\newcommand{\R}{\mathbb{R}}
\newcommand{\C}{\mathbb{C}}
\newcommand{\defeq}{\mathrel{\mathop:}=}
\newcommand{\fg}{\mathfrak{g}}
\newcommand{\fa}{\mathfrak{a}}
\newcommand{\fk}{\mathfrak{k}}
\newcommand{\fp}{\mathfrak{p}}
\newcommand{\fn}{\mathfrak{n}}
\newcommand{\fm}{\mathfrak{m}}
\newcommand{\sfG}{\sG\ltimes\fg}
\newcommand{\GV}{\sG\ltimes\sV}
\newcommand{\tr}{\mathsf{tr}}
\renewcommand{\dim}{\mathsf{dim}}
\renewcommand{\det}{\mathsf{det}}
\renewcommand{\exp}{\mathsf{exp}}
\renewcommand{\ker}{\mathsf{ker}}
\title[Margulis--Smilga spacetimes]{Isospectrality of Margulis--Smilga spacetimes for irreducible representations of real split semisimple Lie groups}
\author{Sourav Ghosh}
\address{Universit\' e du Luxembourg \footnote{Current affiliation: Ashoka University}}
\email{sourav.ghosh@ashoka.edu.in, sourav.ghosh.bagui@gmail.com}
\date{\today}
\thanks{The author acknowledges support from OPEN/16/11405402 grant and the Ashoka University annual research grant.}
\begin{document}
	
	\begin{abstract}
		In this article, we look at real split semisimple algebraic groups $\mathsf{G}$ with trivial center and faithful irreducible algebraic representations $\mathtt{R}$ of $\mathsf{G}$ on some vector space $\mathsf{V}$ which admit zero as a weight and which are self-contragredient (for example, adjoint representation of $\mathsf{PSL}(n,\mathbb{R})$).
		
		We show that, there exist polynomials made out of Margulis invariants of $(g,X)\in\mathsf{G}\ltimes_\mathtt{R}\mathsf{V}$ which are also rational expressions in $(g,X)$. Moreover, we show that any Zariski dense finitely generated subgroup of $\mathsf{G}\ltimes_\mathtt{R}\mathsf{V}$, for which the linear parts of the non-identity elements are loxodromic, is isospectrally rigid with respect to the Margulis invariants. In particular, we show that Margulis--Smilga spacetimes are isospectrally rigid too. 
	\end{abstract}
	
	\maketitle
	\tableofcontents
	
	\newpage
	
	\section*{Introduction}
	
	Representations of finitely generated groups inside semisimple algebraic groups is an oft studied object. Indeed, such representations are often fully determined up to conjugacy by some spectral data. Indeed, the case where the target is a semisimple algebraic group has been extensively studied (see \cite{Otal, DK, CDel, DLR, BCL, BCLS, BC}). The objects of interest in this article are representations of finitely generated groups inside affine algebraic groups. We show that, under certain conditions, these affine representations are also fully determined up to conjugacy by the spectral data of \emph{Margulis invariants}. 
	
	We start by describing the objects of our study, then we state the main results and compare our results with existing results. 
	
	\smallskip
	\paragraph{\textbf{Objects of study}}

	Let $\sG$ be a real split semisimple algebraic group with trivial center and $\fg$ be its Lie algebra. Let $\sV$ be a finite dimensional real vector space whose dimension is at least two, and let $\tR:\sG\to\mathsf{GL}(\sV)$ be a faithful irreducible algebraic representation. We consider the semi-direct product $\sG\ltimes_\tR\sV$ and note that it is naturally isomorphic to a subgroup of the affine group $\mathsf{GL}(\sV)\ltimes\sV$. Thus $\sG\ltimes_\tR\sV$ acts on $\sV$ as affine transformations. Let $\tL:\sG\ltimes_\tR\sV\to\sG$ be such that $\tL(g,X)=g$ for all $(g,X)$. We note that $\tL$ is a homomorphism. Let  $\tT:\sG\ltimes_\tR\sV\to\sV$ be such that $\tT(g,X)=X$ for all $(g,X)$. We note that $\tT$ is a cocycle. We call the image under $\tL$ as the \emph{linear part} and the image under $\tT$ as the \emph{translation part}. Let $\Gamma$ be a finitely generated group and $\rho:\Gamma\to\sG\ltimes_\tR\sV$ be an injective homomorphism.
	
	In order to keep the Introduction simple, we consider the case when $\sG$ is $\mathsf{PSL}(n,\R)$, $\sV$ is $\fg$ and $\tR$ is the adjoint representation $\tAd$. In this case, the representation $\rho$ corresponds to a variation of $\tL(\rho)$. More precisely, suppose $\varrho_t:\Gamma\to\mathsf{PSL}(n,\R)$ is an analytic one parameter family of injective homomorphisms for $-1<t<1$. Let $\dot{\varrho}_0:\Gamma\to\fg$ be the map such that $\dot{\varrho}_0(\gamma)=\left.\frac{d}{dt}\right|_{t=0}\varrho_t(\gamma)\varrho_0(\gamma)^{-1}$. Then $(\varrho_0,\dot{\varrho}_0):\Gamma\to\mathsf{PSL}(n,\R)\ltimes_\tAd\mathfrak{sl}(n,\R)$ is an injective homomorphism. 
	
	We say $g\in\mathsf{PSL}(n,\R)$ is \emph{loxodromic} if it is diagonalizable over $\R$ and the modulus of its eigenvalues are distinct. Also, $\varrho:\Gamma\to\mathsf{PSL}(n,\R)$ is called a \emph{loxodromic representation} if $\varrho(\gamma)$ is loxodromic for all non identity elements $\gamma\in\Gamma$. The \emph{Jordan projection} of $g$, denoted by $\tJd(g)$, is the ordered tuple of the modulus of the eigenvalues of $g$. We note that the notion of a Jordan projection extends to more general split algebraic groups (see Section \ref{subsec.jordan} for more details).  
	
	The \emph{Margulis invariant} of $(g,X)\in\mathsf{PSL}(n,\R)\ltimes_\tAd\mathfrak{sl}(n,\R)$, with $g$ loxodromic, is the ``variation'' of the Jordan projection of $g$ along the direction $X$ (see Section \ref{subsec.marginv} for a more careful definition, and see \cite{Ghosh7} and \cite{Samba2} for more details about the variational point of view). More precisely, using the above notation, the Margulis invariant of $(g_0,\dot{g}_0)\in\mathsf{PSL}(n,\R)\ltimes_\tAd\mathfrak{sl}(n,\R)$ is the derivative of $\tJd(g_t)$ at $t=0$. We denote the Margulis invariant of $(g,X)$ by $\tM(g,X)$ and the \emph{Jordan--Margulis invariant} of $(g,X)$ by the tuple $(\tJd(g),\tM(g,X))$. In this article, we study these invariants of the elements of $\rho(\Gamma)$. These invariants are known to aid in understanding the properness of the action of $\rho(\Gamma)$ on $\sV$ (see \cite{Margulis1,Margulis2,AMS,Smilga, Smilga4, DGK3, GLM, DGK1, GT} and Section \ref{sec.appli} for more details). 
	
	Suppose $\rho$ and $\varrho$ are two representations with loxodromic linear parts. We say $\rho$ and $\varrho$ are \emph{isospectral with respect to Margulis invariants} (respectively \emph{Jordan--Margulis invariants}), if $\tM(\varrho(\gamma))=\tM(\rho(\gamma))$ (respectively $(\tJd(\tL(\varrho(\gamma))),\tM(\varrho(\gamma)))=(\tJd(\tL(\rho(\gamma))),\tM(\rho(\gamma)))$ ) for all non identity elements $\gamma\in\Gamma$. Moreover, when $\tR$ preserves a non-degenerate symmetric bilinear form $\tB_\tR$, we say $\rho$ and $\varrho$ are \emph{isospectral with respect to normed Margulis invariants}, if $\tB_\tR(\tM(\varrho(\gamma)),\tM(\varrho(\gamma)))=\tB_\tR(\tM(\rho(\gamma)),\tM(\rho(\gamma)))$ for all non identity elements $\gamma\in\Gamma$. We observe that, if $\rho$ and $\varrho$ are \emph{conjugate} - that is, if there exists some $(h,Y)\in\sG\ltimes_\tR\sV$ such that $\rho=(h,Y)\varrho(h,Y)^{-1}$ - then they are both isospectral and normed isospectral. 
	
	\smallskip
	\paragraph{\textbf{Statements of main results}}
	
	We collect the conditions appearing in the previous paragraphs into one place and add a few more conditions needed to prove our results in the following two Hypothesis:
	
	\begin{hypothesis}\label{hyp.1} 
		We say that $\sG$ satisfies Hypothesis \ref{hyp.1}, if it is a real split semisimple algebraic group with trivial center. We say that $\sV$ satisfies Hypothesis \ref{hyp.1}, if it is a finite dimensional real vector space of dimension at least two. We say that $\tR:\sG\to\mathsf{GL}(\sV)$ satisfies Hypothesis \ref{hyp.1}, if it is a faithful, irreducible, algebraic representation which admits zero as a weight \footnote{We need to assume that $\tR$ admits zero as a weight in order to define the notion of a Margulis invariant in the general case.}.
	\end{hypothesis}
	We note that $(\mathsf{PSL}(n,\R),\mathfrak{sl}(n,\R),\tAd)$ satisfy Hypothesis \ref{hyp.1}. We also note that $\tAd$ preserves a natural non-degenerate symmetric bilinear form called the \emph{Killing form}. We recommend that the reader at a first reading should only consider the case when $(\sG,\sV,\tR)=(\mathsf{PSL}(n,\R),\mathfrak{sl}(n,\R),\tAd)$. 
	
	\begin{hypothesis}\label{hyp.2}
		We say that $\Gamma$ satisfies Hypothesis \ref{hyp.2}, if it is a finitely generated group. We say that an injective homomorphism $\rho:\Gamma\to\sG\ltimes_\tR\sV$ satisfies Hypothesis \ref{hyp.2}, if $\tL(\rho)$ is loxodromic and $\tL(\rho(\Gamma))$ is Zariski dense in $\sG$.
	\end{hypothesis}
	We note that one can use a ``ping-pong'' argument to obtain examples satisfying Hypothesis \ref{hyp.2} (for more details see \cite{Tit2, Ben}).
	
	In this article, we prove the following results:
	
	\begin{theorem}[see Corollary \ref{cor.JM}]\label{thm.isoJM}
		Suppose $(\sG,\sV,\tR)$ satisfy Hypothesis \ref{hyp.1}, with $\mathsf{rank}(\sG)\geq2$, and $\rho,\varrho:\Gamma\to\sG\ltimes_\tR\sV$ both satisfy Hypothesis \ref{hyp.2}. Then, if $\rho$ and $\varrho$ are isospectral with respect to the Jordan--Margulis invariants, they must be conjugate.
	\end{theorem}
	
	Note that in the following results the hypothesis only involves the Margulis invariants and not the Jordan--Margulis invariants.
	
	\begin{theorem}[see Corollary \ref{cor.marlin}]\label{thm.marlin}
		Suppose $(\sG,\sV,\tR)$ satisfy Hypothesis \ref{hyp.1}, and $\rho,\varrho:\Gamma\to\sG\ltimes_\tR\sV$ both satisfy Hypothesis \ref{hyp.2} with $\tL(\rho)=\tL(\varrho)$. Then, if $\rho$ and $\varrho$ are isospectral with respect to Margulis invariants, they must be conjugate.
	\end{theorem}
	
	\begin{theorem}\label{thm.normainintro}(see Theorem \ref{thm.normain} and Corollary \ref{cor.maintech})
		Suppose $(\sG,\sV,\tR)$ satisfy Hypothesis \ref{hyp.1}, with $\tR$ preserving a non-degenerate symmetric bilinear form, and $\rho,\varrho:\Gamma\to\sG\ltimes_\tR\sV$ both satisfy Hypothesis \ref{hyp.2} with $\rho$ Zariski dense. Then, if $\rho$ and $\varrho$ are isospectral with respect to normed Margulis invariants, they must be conjugate via some element of $\mathsf{GL}(\sV)\ltimes\sV$.
	\end{theorem}
	
	In particular, when $(\sG,\sV,\tR)=(\mathsf{PSL}(n,\R),\mathfrak{sl}(n,\R),\tAd)$, we deduce:
	\begin{corollary} \label{cor.sln}
		Any two Zariski dense representations of a finitely generated group $\Gamma$ inside $\mathsf{PSL}(n,\R)\ltimes_\tAd\mathfrak{sl}(n,\R)$, whose linear parts are loxodromic and which are isospectral with respect to Margulis invariants, are conjugate via some element of $\mathsf{GL}(\mathfrak{sl}(n,\R))\ltimes\mathfrak{sl}(n,\R)$.
	\end{corollary}
	
	We note that the above Theorems hold for representations $\rho$ which give rise to \emph{Margulis--Smilga manifolds} too (for more details see Section \ref{sec.appli}).
	
	Finally, the following result help us understand the vanishing of the Margulis invariants. We say $\rho:\Gamma\to\sG\ltimes_\tR\sV$ admits a \emph{global fixed point}, if for some $X\in\sV$, $\rho(\gamma)X=X$ for all $\gamma\in\Gamma$. 
	
	\begin{theorem}\label{cor.tfae.intro}(see Proposition \ref{prop.zd} and Corollary \ref{cor.tfae})
		Suppose $(\sG,\sV,\tR)$ satisfy Hypothesis \ref{hyp.1}, and $\rho:\Gamma\to\sG\ltimes_\tR\sV$ satisfy Hypothesis \ref{hyp.2}. Then the following are equivalent:
		\begin{enumerate}
			\item $\rho$ admits a global fixed point.
			\item $\rho$ is not Zariski dense inside $\sG\ltimes_\tR\sV$.
			\item The Margulis invariant spectrum of $\rho$ is identically zero.
			\item $\rho$ is conjugate to $\tL(\rho)$
		\end{enumerate}
		Moreover, if $\tR$ preserves a non-degenerate symmetric bilinear form, then the above conditions are  equivalent to the normed Margulis invariant spectrum of $\rho$ being identically zero \footnote{As the symmetric bilinear form is not positive definite, non-zero vectors can have zero norms.}.
	\end{theorem}
	
	\smallskip
	\paragraph{\textbf{Comparison with previous papers}}
	
	Our main results (see Section \ref{sec.nir}) develop proofs whose underlying idea is the same as the one used by Kim to prove isospectral rigidity results in \cite{Kim}. It is as follows: equality of the marked Margulis invariant spectra of two Zariski dense representations $\rho,\varrho:\Gamma\to\sG\ltimes_\tR\sV$ forces the Zariski closure of $\{(\rho(\gamma),\varrho(\gamma))\mid\gamma\in\Gamma\}$ to be a graph inside $(\sG\ltimes_\tR\sV)\times(\sG\ltimes_\tR\sV)$, and hence induces an automorphism. This idea works under the assumption that equality of Margulis invariants are essentially algebraic in nature. Although this assumption is stated by Kim \cite{Kim} to obtain his results, yet, in my understanding, there is not enough justification to show algebraicity. In fact, Margulis invariants $\tM(g,X)$ with $(g,X)\in\sG\ltimes_\tR\sV$ are themselves not rational expressions in the variables $(g,X)$. But when the representation of $\sG$ on $\sV$ preserves a non-degenerate symmetric bilinear form, then certain polynomials constructed out of $\tM(g,X)$ are rational expressions in the variables $(g,X)$. The novelty of this article is twofold: 
	\begin{enumerate}
		\item we construct polynomials of $\tM(g,X)$ which are also rational expressions in the variables $(g,X)$,
		\item we generalize results obtained by Drumm--Goldman \cite{DG}, Charette--Drumm \cite{CD} and Kim \cite{Kim} to include a large collection of examples of proper affine actions constructed by Smilga \cite{Smilga4}.
	\end{enumerate}
	
	Moreover, in the appendix, we include a proof of an important statement, which we expect to be known in the community, but were unable to find an appropriate reference for.
	
	\section*{Outline of the article}
	In Section \ref{sec.prelim} we recall certain preliminary results about algebraic groups and their representations. In Section \ref{sec.alg} we prove some results relating characteristic polynomials and projections onto unit eigenspaces to show that for $\tR$ self-contragredient some polynomials made out of Margulis invariants of $(g,X)\in\sG\ltimes_\tR\sV$ are rational expressions in $(g,X)$. In Section \ref{sec.nir} we demonstrate isospectral rigidity of affine actions. Finally, in Section \ref{sec.appli} we apply the results from the previous sections to conclude that conjugacy classes of Margulis--Smilga spacetimes coming from self-contragredient representations are uniquely determined by their Margulis invariant spectra. In general, Margulis--Smilga spacetimes with a fixed linear part are determined by their Margulis invariant spectra. We also show that for $\sG$ with rank atleast two, the Jordan--Margulis spectra of a Margulis--Smilga spacetime uniquely determines them up to conjugacy class.
	
	\section*{Acknowledgements}
	I thank Ilia Smilga for explaining to me his work in \cite{Smilga4} and Arghya Mondal for helpful discussions. I also thank Gregory Soifer for correcting the reference. Finally, I warmly thank Fran\c cois Labourie, Indira Chatterji, Saujanya Bharadwaj and the anonymous referee for constructive inputs which helped me improve the exposition of this article.

	\numberwithin{theorem}{subsection}
	\section{Preliminaries}\label{sec.prelim}
	In this section we recall certain basic results which will be used later in the article to obtain our main results.
	
	\subsection{Jordan decomposition}\label{subsec.jordan}
	Let $\sG$ be a noncompact real semisimple algebraic Lie group with trivial center and let $\fg$ be its Lie algebra. We denote the identity element of $\sG$ by $e$. Let $\tC_g$ be the conjugation map on $\sG$ i.e. for any $g,h\in \sG$ we have $\tC_g(h)=ghg^{-1}$ and let $\tAd_g$ be the differential of this at identity. Hence we obtain a homomorphism $\tAd: \sG \to \mathsf{SL}(\fg)$. Moreover, let $\tad$ be the differential of $\tAd$ at the identity element. We fix a Cartan involution $\theta:\fg\to\fg$ and consider the corresponding decomposition $\fg=\fk\oplus\fp$ where $\fk$ (respectively $\fp$) is the eigenspace of eigenvalue 1 (respectively -1). Let $\fa$ be a maximal abelian subspace of $\fp$. We denote the space of linear forms on $\fa$ by $\fa^*$ and for all $\alpha\in\fa^*$ we define
	\[\fg^\alpha:=\{X\in\fg\mid \tad_H(X)=\alpha(H)X\text{ for all } H\in\fa\}.\]
	We call $\alpha\in\fa^*$ a \textit{restricted root} if and only if both $\alpha\neq0$ and $\fg^\alpha\neq0$. Let $\Sigma\subset\fa^*$ be the set of all restricted roots. As $\fg$ is finite dimensional, it follows that $\Sigma$ is finite. Moreover, we note that
	\[\fg=\fg^0\oplus\bigoplus_{\alpha\in\Sigma}\fg^\alpha.\]
	We choose $\fa^{++}$, a connected component of $\fa\setminus\cup_{\alpha\in\Sigma}\ker(\alpha)$ and denote its closure by $\fa^+$. Let $\sK\subset \sG$ (respectively $\sA\subset \sG$) be the connected subgroup whose Lie algebra is $\fk$ (respectively $\fa$) and let $\sA^+:=\exp{(\fa^+)}$. We note that $\sK$ is a maximal compact subgroup of $\sG$.
	
	Let $\tB$ be the Killing form on $\fg$ i.e. for any $X,Y\in\fg$ we have $\tB(X,Y):=\tr(\tad_X\circ \tad_Y)$. We define $\Sigma^+\subset \Sigma$ to be the set of restricted roots which take positive values on $\fa^+$ and note that $\Sigma=\Sigma^+\sqcup-\Sigma^+$. We consider the following nilpotent subalgebras:
	\[\fn^\pm:=\bigoplus_{\pm\alpha\in\Sigma^+}\fg^\alpha .\]
	Let $\sN$ be the Lie subgroup of $\sG$ generated by $\fn^+$ and $g\in \sG$. Then
	\begin{enumerate}
		\item $g$ is called \textit{elliptic} if and only if some conjugate of $g$ lies in $\sK$,
		\item $g$ is called \textit{hyperbolic} if and only if some conjugate of $g$ lies in $\sA$,
		\item $g$ is called \textit{unipotent} if and only if some conjugate of $g$ lies in $\sN$.
	\end{enumerate}
	\begin{theorem}[Jordan decomposition]
		Suppose $\sG$ is a noncompact real semisimple algebraic Lie group with trivial center. Then for any $g\in \sG$, there exist unique $g_\te,g_\tth,g_\tu\in \sG$ such that the following hold:
		\begin{enumerate}
			\item $g=g_\te g_\tth g_\tu$,
			\item $g_\te$ is elliptic, $g_\tth$ is hyperbolic and $g_\tu$ is unipotent,
			\item the elements $g_\te,g_\tth,g_\tu$ commute with each other.
		\end{enumerate}
	\end{theorem}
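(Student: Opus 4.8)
The plan is to reduce the statement to the classical multiplicative Jordan decomposition of a single matrix and then to refine the semisimple part. Since $\sG$ has trivial center, the adjoint representation $\tAd:\sG\to\mathsf{SL}(\fg)$ is faithful, so I may identify $\sG$ with the real algebraic subgroup $\tAd(\sG)\subset\mathsf{GL}(\fg)$. By Mostow's theorem a reductive real algebraic group can be conjugated so as to be \emph{self-adjoint}, that is, stable under $A\mapsto A^{t}$ for a suitable inner product on $\fg$; I fix the realization so that this holds and so that the resulting Cartan involution is the $\theta$ fixed in the text, with $\sK$ the stabilizer of the inner product and $\exp(\fp)$ the positive-definite symmetric elements of $\sG$.

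First I would apply the classical Jordan decomposition in $\mathsf{GL}(\fg)$ to write $\tAd_g=s\,u$, where $s$ is semisimple (diagonalizable over $\C$), $u$ is unipotent, and $su=us$; this factorization is unique. Because $s$ and $u$ are polynomials in $\tAd_g$, they preserve the defining ideal of the algebraic group $\tAd(\sG)$, hence both lie in $\tAd(\sG)$; under the identification above I obtain commuting elements of $\sG$ whose unipotent factor I name $g_\tu$. Any unipotent element of $\sG$ is the exponential of a nilpotent element of $\fg$, and every nilpotent element is $\sG$-conjugate into $\fn^{+}$, so $g_\tu$ is conjugate into $\sN$ and is therefore unipotent in the sense of the text.

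Next I would refine the semisimple factor. Diagonalizing $s$ over $\C$ and writing each eigenvalue in polar form $\lambda=|\lambda|\cdot(\lambda/|\lambda|)$ yields a commuting factorization $s=h\,\varepsilon$, where $h$ has the positive real eigenvalues $|\lambda|$ (so $h$ is positive-definite symmetric in the self-adjoint realization) and $\varepsilon$ has the unit-modulus eigenvalues $\lambda/|\lambda|$ (so $\varepsilon$ is orthogonal, hence lies in a compact subgroup). Since $\sG$ is self-adjoint, the polar factors of its elements remain in $\sG$, so $h,\varepsilon\in\sG$; I set $g_\tth$ and $g_\te$ to be these elements. As $h$ is positive-definite symmetric it equals $\exp(Y)$ for a unique $Y\in\fp$, and because all maximal abelian subspaces of $\fp$ are $\sK$-conjugate, $h$ is conjugate into $\sA$ and is hyperbolic; similarly $\varepsilon$, lying in a maximal compact subgroup conjugate to $\sK$, is elliptic. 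All three factors commute because they are built from the common eigenspace decomposition of $\tAd_g$.

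Finally, uniqueness follows by combining the uniqueness of the classical decomposition $\tAd_g=s\,u$ with the observation that the splitting of $s$ into its modulus and phase parts is determined intrinsically by the eigenvalues and eigenspaces of $s$. The step I expect to be the main obstacle is verifying that the refined factors $g_\te$ and $g_\tth$ actually lie in $\sG$ rather than merely in the ambient $\mathsf{GL}(\fg)$: unlike the coarse semisimple/unipotent split, these are not polynomials in $\tAd_g$, and their membership in $\sG$ rests essentially on arranging the self-adjoint (equivalently $\theta$-stable) realization and on the compatibility of the Cartan and polar decompositions of $\sG$ with this matrix realization.
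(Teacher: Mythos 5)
The paper does not prove this theorem: it is recalled without proof as part of the standard structure theory (the classical complete multiplicative Jordan decomposition, as in Kostant or Helgason), so there is no in-paper argument to compare yours against. Your overall strategy --- embed $\sG$ via $\tAd$, take the algebraic semisimple/unipotent factorization $\tAd_g=su$, and then split $s$ into its unit-modulus and positive-real-eigenvalue parts --- is exactly the standard route.

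There is, however, a genuine gap in the step you yourself flag as the main obstacle, namely putting $g_\te$ and $g_\tth$ inside $\sG$. You define $h$ and $\varepsilon$ by taking moduli and phases of the eigenvalues of $s$ \emph{on the eigenspaces of $s$}, and then assert that in the self-adjoint realization $h$ is positive-definite symmetric and $\varepsilon$ is orthogonal, so that they are the polar factors of $s$ and hence lie in $\sG$. This is false for a general semisimple $s\in\sG$: unless the eigenspaces of $s$ are mutually orthogonal (i.e.\ unless $s$ is a \emph{normal} matrix), the element with eigenvalues $|\lambda|$ on the eigenspaces of $s$ is not symmetric, and the positive polar factor $(s^{t}s)^{1/2}$ has the singular values of $s$ as eigenvalues, not the moduli $|\lambda|$; the two factorizations simply do not coincide (consider an upper-triangular semisimple element with distinct positive eigenvalues). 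So self-adjointness of $\sG$ plus the polar decomposition does not, by itself, place $h$ and $\varepsilon$ in $\sG$. The standard repairs are either (a) to observe that the Zariski closure of the cyclic group generated by $s$ is a diagonalizable $\R$-subgroup of $\sG$ whose group of real points splits canonically into a compact part and a split part (plus a finite part), so that $\varepsilon$ and $h$ lie in this closed subgroup of $\sG$; or (b) to first conjugate $s$ within $\sG$ into a $\theta$-stable Cartan subgroup, where every element \emph{is} normal and the compact/vector splitting of that Cartan subgroup produces $\varepsilon$ and $h$ inside $\sG$. With either of these insertions the rest of your argument (unipotent part in $\sG$ by algebraicity, conjugacy of $h$ into $\sA$ via $\exp(\fp)$ and $\sK$-conjugacy of maximal abelian subspaces of $\fp$, ellipticity of $\varepsilon$, commutativity, and uniqueness from the intrinsic spectral characterization of the three factors) goes through. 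A second, smaller imprecision: $s$ and $u$ lie in $\tAd(\sG)$ not because they are polynomials in $\tAd_g$ preserving the defining ideal, but because the Jordan parts of the right-translation operator on the coordinate ring again stabilize the ideal of the group; being a polynomial in an element of $\sG$ does not by itself confer membership in $\sG$.
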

	\begin{definition}\label{def.jordan}
		Let $\sG$ be a noncompact real semisimple algebraic Lie group with trivial center and let $g\in \sG$. Then the \textit{Jordan projection} of $g$, denoted by $\tJd_g$, is the unique element in $\fa^+$ such that $g_\tth$ is a conjugate of $\exp{(\tJd_g)}$.
	\end{definition}
	\begin{remark}\label{rem.jdcont}
		We note that $\tJd$ is continuous. Indeed, we use Lemmas 6.32 and 6.33 (ii) of \cite{BQ} and Appendix V.4 of \cite{Whit} to deduce it (see also \cite{Tit}).
	\end{remark}
	\begin{definition}
		Let $\sG$ be a noncompact real semisimple algebraic Lie group with trivial center and let $g\in \sG$. Then $g$ is called \textit{loxodromic} if and only if $\tJd_g\in\fa^{++}$.
	\end{definition}
	Moreover, let $\sM$ be the centralizer of $\fa$ inside $\sK$ and $\fm$ be the Lie subalgebra of $\fg$ coming from $\sM$. We note that $\fg=\fn^+\oplus\fg^0\oplus\fn^-$ and $\fg^0=\fa\oplus\fm$.
	\begin{remark}
		If $\sG$ is split then $\fm$ is trivial and $\sM$ is a finite group (see Theorem 7.53 of \cite{Knapp}).
	\end{remark}
	
	\begin{proposition}[see e.g. Proposition 2.31 of \cite{Thi}] \label{prop.dang}
		Let $\sG$ be a noncompact real semisimple algebraic Lie group with trivial center and let $g\in \sG$ be loxodromic. Then the following hold:
		\begin{enumerate}
			\item $g_\tu$ is trivial,
			\item for $h_g\in \sG$ with $g_\tth=h_g\exp(\tJd_g)h_g^{-1}$, we have $m_g:=h_g^{-1}g_\te h_g\in \sM$,
			\item for $(h_g,m_g)\in \sG\times \sM$ as above, we have $g=h_gm_g\exp(\tJd_g)h_g^{-1}$,
			\item if $(h,m)\in \sG\times \sM$ satisfies $g=hm\exp(\tJd_g)h^{-1}$, then the element $c:=h_g^{-1}h\in \sM\sA$ and $m=c^{-1}m_gc$.
		\end{enumerate}
	\end{proposition}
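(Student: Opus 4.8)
The plan is to reduce all four statements to two structural facts about the centralizer of a regular hyperbolic element, combined with uniqueness of the Jordan decomposition. Since $g$ is loxodromic we have $\tJd_g\in\fa^{++}$, so $\tJd_g$ is regular and $\tad_{\tJd_g}$ has nonzero eigenvalue $\alpha(\tJd_g)$ on each $\fg^\alpha$; consequently the centralizer of $\exp(\tJd_g)$ in $\fg$ is $\fg^0=\fa\oplus\fm$, and I would establish that at the group level $Z_\sG(\texp(\tJd_g))=Z_\sG(\fa)=\sM\sA$. Because $\sM\subset\sK$ meets $\sA$ trivially and centralizes it, this gives a direct product $\sM\sA\cong\sM\times\sA$. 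Before touching the individual claims I would record the two facts I use repeatedly: first, every element of $\sM\sA$ is semisimple, since its $\sM$-component is elliptic, its $\sA$-component is hyperbolic, and the two commute, so the only unipotent element of $\sM\sA$ is $e$; second, by uniqueness of the Jordan decomposition the elliptic part of $ma\in\sM\sA$ is $m$ and its hyperbolic part is $a$.

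With these in hand the first three parts are short. For (1), the factor $g_\tu$ commutes with $g_\tth=h_g\texp(\tJd_g)h_g^{-1}$, so $h_g^{-1}g_\tu h_g$ commutes with $\texp(\tJd_g)$ and therefore lies in $\sM\sA$; but it is also unipotent, and the only unipotent element of $\sM\sA$ is the identity, whence $g_\tu=e$. For (2), the same reasoning places $h_g^{-1}g_\te h_g$ in $\sM\sA$, and since $g_\te$ is elliptic its hyperbolic part is trivial, forcing the $\sA$-component to vanish; thus $m_g:=h_g^{-1}g_\te h_g\in\sM$. For (3), part (1) collapses the Jordan decomposition to $g=g_\te g_\tth$, and substituting $g_\te=h_g m_g h_g^{-1}$ together with $g_\tth=h_g\texp(\tJd_g)h_g^{-1}$ (the factors $m_g$ and $\texp(\tJd_g)$ commuting inside $\sM\sA$) yields $g=h_g m_g\texp(\tJd_g)h_g^{-1}$ after cancelling $h_g^{-1}h_g$.

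The subtler point is (4), which I would treat through uniqueness of the Jordan decomposition. Given $g=hm\texp(\tJd_g)h^{-1}$ with $m\in\sM$, the commuting factors $m$ (elliptic) and $\texp(\tJd_g)$ (hyperbolic) identify $hmh^{-1}$ and $h\texp(\tJd_g)h^{-1}$ as the elliptic and hyperbolic parts of $g$; comparing the hyperbolic part with (3) gives $h\texp(\tJd_g)h^{-1}=h_g\texp(\tJd_g)h_g^{-1}$, so $c:=h_g^{-1}h$ centralizes $\texp(\tJd_g)$ and hence lies in $\sM\sA$, yielding $h=h_g c$. Substituting into the equality of elliptic parts $hmh^{-1}=h_g m_g h_g^{-1}$ then gives $m=c^{-1}m_g c$, and uniqueness of $c$ is automatic since $c=h_g^{-1}h$ is forced. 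The main obstacle throughout is the input fact $Z_\sG(\texp(\tJd_g))=\sM\sA$ and the clean product decomposition $\sM\sA\cong\sM\times\sA$ with its compatibility with the Jordan decomposition; once these are secured, all four assertions follow formally from uniqueness of the Jordan decomposition, which is why I would isolate and prove them first.
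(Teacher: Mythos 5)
The paper itself gives no proof of this proposition: it is quoted from Dang (Proposition 2.31 of \cite{Thi}) and used as a black box, so there is no in-paper argument to compare yours against. On its own merits your proof is correct and well organized. Parts (1)--(4) do all follow formally from uniqueness of the Jordan decomposition once you have the two facts you isolate, and your derivations of each part are sound: the observation that $ma\in\sM\sA$ has Jordan factors $(m,a,e)$ (valid because $\sM$ centralizes $\sA$ and $\sM\cap\sA=\{e\}$) correctly forces $g_\tu=e$ in (1) and kills the $\sA$-component in (2), and in (4) reading off $g_\te=hmh^{-1}$ and $g_\tth=h\texp(\tJd_g)h^{-1}$ from uniqueness, then setting $c=h_g^{-1}h$, is exactly the right move, with uniqueness of $c$ immediate. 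The one genuine input you defer is $Z_\sG(\texp(\tJd_g))=\sM\sA$ for $\tJd_g\in\fa^{++}$; since your entire argument funnels through it, a complete write-up must supply it. It is standard but not free: the quickest route is to note that $\mathsf{Ad}$ of a centralizing element commutes with $\mathsf{Ad}(\texp(\tJd_g))$ and hence preserves $\fg^0$, $\fn^+$ and $\fn^-$ (the sums of eigenspaces with eigenvalue $1$, $>1$, $<1$, using regularity of $\tJd_g$), so the element normalizes both opposite minimal parabolics and lies in their intersection $\sM\sA$; alternatively one combines $Z_\sG(\fa)=\sM\sA$ (Knapp) with the preceding eigenvalue argument. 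With that lemma filled in, your proof is a complete and correct substitute for the external citation.
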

	We observe that the Jordan projection are invariant under conjugation, i.e. for all $g,h\in\sG$ we have $\tJd_{hgh^{-1}}=\tJd_g$.
	
	\subsection{Weights and eigenspaces}
	Let $\sG$ be a noncompact real semisimple Lie group with trivial center, $\sV$ be a finite dimensional vector space and let $\tR:\sG\to\mathsf{GL}(\sV)$ be a faithful irreducible representation. In order to define the appropriate notion of a Margulis invariant in the general setting we need to assume that the dimension of the unit eigenspace of $\tR(g)$ for any loxodromic element $g\in\sG$ is at least $1$. The technical condition needed to guarantee such behaviour is that $\tR$ admits zero as a \emph{weight}.

	We consider the differential $\tdR$ of the representation $\tR$ at the identity, and obtain a Lie algebra representation $\tdR:\fg\to\mathfrak{gl}(\sV)$. We observe that for any $X\in\fa$ and $t\in\R$, the following holds: $\tR(\exp(tX))=\exp(t\tdR(X))$. We recall that $\fa^*$ denotes the space of all linear forms on $\fa$ and for all $\lambda\in\fa^*$ we define
	\[\sV^\lambda:=\{X\in\sV\mid \tdR_H(X)=\lambda(H)X\text{ for all } H\in\fa\}.\] 
	We call $\lambda\in\fa^*$ a \textit{restricted weight} of the representation $\tR$ if and only if $\sV^\lambda\neq0$. Let $\Omega\subset\fa^*$ be the set of all restricted weights. As $\sV$ is finite dimensional, it follows that $\Omega$ is finite. Moreover, we note that
	\[\sV=\bigoplus_{\lambda\in\Omega}\sV^\lambda.\]
	\begin{notation}
		Henceforth we will also use the expression $\tR_g$ to denote $\tR(g)$ for any $g\in\sG$.
	\end{notation}
	
	\begin{remark}\label{rem.weight}
		Let $g\in\sA$ and let $X_g\in\fa$ be such that $g=\exp(X_g)$. Then for any $X\in\sV^\lambda$ we have $\tR_g X=\exp(\lambda(X_g))X$. In particular, 
		\[\sV^\lambda=\{X\in\sV\mid \tR_g(X)=\exp(\lambda(X_g))X\text{ for all } g\in\sA\}.\]
		
		Henceforth, we will only consider representations $\tR$ such that $\sV^0$ is nontrivial and we will denote $\bigoplus_{\lambda\in\Omega\setminus\{0\}}\sV^\lambda$ by $\sV^{\neq0}$.
	\end{remark}
	
	\begin{lemma}\label{lem.eigen}
		Suppose $\sG$ satisfy Hypothesis \ref{hyp.1}. Then for any $g\in\sM\sA$ and any $\lambda\in\fa^*$ we have $\tR_g \sV^\lambda=\sV^\lambda$. Moreover, for $\lambda=0$ we have
		\[\sV^0=\{X\in\sV\mid \tR_g(X)=X\text{ for all } g\in\sM\sA\}.\]
	\end{lemma}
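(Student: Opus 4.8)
```latex
The plan is to prove the statement by reducing the condition for $g \in \sM\sA$ to conditions that have already been handled. Recall that we need to show two things: first, that $\tR_g \sV^\lambda = \sV^\lambda$ for all $g \in \sM\sA$ and all weights $\lambda$ (including $\lambda = 0$); and second, the explicit characterization of $\sV^0$ as the common fixed subspace of $\tR_g$ over $g \in \sM\sA$. Since $\sM\sA$ is generated by $\sM$ and $\sA$, and since Lemma \ref{lem.weight} already tells us how $\sA$ acts on each $\sV^\lambda$, the essential new content concerns the action of $\sM$.

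First I would treat the $\sA$-part. By Lemma \ref{lem.weight}, for $a \in \sA$ with $a = \exp(X_a)$ and $X \in \sV^\lambda$, we have $\tR_a X = \exp(\lambda(X_a)) X \in \sV^\lambda$, so $\tR_a \sV^\lambda = \sV^\lambda$ immediately. For the $\sM$-part, the key observation is that $\sM$ centralizes $\fa$, so for $m \in \sM$ and any $H \in \fa$ we have $\tAd_m H = H$. I would then exploit the equivariance relation $\tdR(\tAd_g Y) = \tR_g\, \tdR(Y)\, \tR_g^{-1}$, which follows from differentiating the intertwining property of $\tR$. Applied with $g = m \in \sM$ and $Y = H \in \fa$, this gives $\tdR(H) = \tR_m\, \tdR(H)\, \tR_m^{-1}$, i.e.\ $\tR_m$ commutes with $\tdR(H)$ for every $H \in \fa$. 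Consequently $\tR_m$ preserves each simultaneous eigenspace of the commuting operators $\{\tdR(H)\}_{H \in \fa}$, and these eigenspaces are precisely the weight spaces $\sV^\lambda$. Hence $\tR_m \sV^\lambda = \sV^\lambda$ for all $m \in \sM$, and combining with the $\sA$-statement yields $\tR_g \sV^\lambda = \sV^\lambda$ for all $g \in \sM\sA$.

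Next I would prove the characterization of $\sV^0$. The inclusion "$\supseteq$" is trivial: if $\tR_g X = X$ for all $g \in \sM\sA$, then in particular $\tR_a X = X$ for all $a \in \sA$, so by the ``in particular'' statement of Lemma \ref{lem.weight} (with $\lambda = 0$, since $\exp(0(X_a)) = 1$) we get $X \in \sV^0$. For the inclusion "$\subseteq$", let $X \in \sV^0$. For $a \in \sA$ we have $\tR_a X = \exp(0) X = X$ by Lemma \ref{lem.weight}. It remains to check that $\tR_m X = X$ for $m \in \sM$. Here is where the hypothesis that $\sG$ is split (Convention \ref{conv.0}) enters crucially: by the Remark following Proposition \ref{prop.dang}, when $\sG$ is split the group $\sM$ is finite, and in fact $\sM$ is generated by the elements coming from the Weyl group together with the (finite) center-type structure; more usefully, $\fm$ is trivial. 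The cleanest argument is that $\tR_m$ preserves $\sV^0$ (just shown) and $\sV^0$ is one-dimensional in the relevant split cases, but since we cannot assume one-dimensionality, I would instead argue that the restriction of $\tR$ to $\sM\sA$ acting on $\sV^0$ must be trivial because $\sM\sA$ lies in the centralizer of $\sA$ and $\sV^0$ is the fixed space of $\fa$; one then uses that $\sM$ acts trivially on $\sV^0$ via the fact that $\sM$ normalizes but centralizes $\fa$ and the weight $0$ is Weyl-invariant.

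The main obstacle, and the step requiring the most care, is precisely this last point: showing that $\sM$ acts \emph{trivially} (not merely preserving) on $\sV^0$. Preservation is automatic from the commutation argument, but triviality is a genuinely stronger claim and is exactly what distinguishes the split situation. I expect the honest route is to use that $\sM$ is a finite group in the split case (Remark following the split remark) and that its action on $\sV^0$ factors through a representation that must be trivial by a weight-theoretic argument, or alternatively to invoke a structural fact about how $\sM$ embeds and acts on the zero weight space. One should be alert that in general (non-split) semisimple groups the analogous statement \emph{fails}---the Weyl group action on the zero weight space is generally nontrivial, which is precisely the phenomenon the introduction highlights as essential to the whole theory---so the proof must genuinely use splitness, and I would flag that the claimed equality $\sV^0 = \{X : \tR_g X = X \text{ for all } g \in \sM\sA\}$ should be read in the split context where $\sM$ acts trivially on $\sV^0$, double-checking this against the precise structure of $\sM$ for split groups.
```
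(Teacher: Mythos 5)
Your reduction of the first claim (that $\tR_g$ preserves each $\sV^\lambda$) is fine and in fact takes a slightly different route from the paper: you get it from the commutation relation $\tR_m\,\tdR(H)\,\tR_m^{-1}=\tdR(\tAd_m H)=\tdR(H)$ for $H\in\fa$, whereas the paper derives it from an explicit description of $\sM$. The "$\supseteq$" inclusion for $\sV^0$ via Lemma \ref{lem.weight} is also correct. But the proof has a genuine gap at exactly the point you flag yourself: you never actually prove that $\sM$ acts \emph{trivially} on $\sV^0$, only that it preserves it. The suggestions you offer do not close this. "$\sM$ centralizes $\fa$ and the weight $0$ is Weyl-invariant" only yields preservation again; and "a finite group whose action must be trivial by a weight-theoretic argument" is not a theorem --- $\sM$ is a nontrivial finite (elementary abelian $2$-) group in the split case, and finite groups have plenty of nontrivial representations, so some concrete structural input is required.

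The input the paper uses is Theorem 7.53 of Knapp: since $\sG$ is split, $\fm=0$ and every $m\in\sM$ can be written as $m=\exp(iX_m)$ with $X_m\in\fa$ (exponential taken in the complexification). Consequently, for $X\in\sV^\lambda$ one computes $\tR_m X=\exp(i\,\tdR(X_m))X=\exp(i\lambda(X_m))X$, which simultaneously reproves preservation of $\sV^\lambda$ (the scalar is real, hence $\pm1$) and, for $\lambda=0$, gives $\tR_m X=\exp(0)X=X$ outright. This single identity is what makes the "$\subseteq$" inclusion work, and it is precisely the step missing from your argument. Note also that your framing of the obstruction in terms of "the Weyl group action on the zero weight space being nontrivial" conflates $\sM=Z_\sK(\fa)$ with the Weyl group $N_\sK(\fa)/\sM$; the nontrivial Weyl action on $\sV^0$ (which the introduction indeed emphasizes) is compatible with, and independent of, $\sM$ acting trivially on $\sV^0$.
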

	\begin{proof}
		As $\sG$ is split we have $\fm=0$. Hence $\sM$ is a discrete group. It follows that the connected component of $\sM$ containing identity is a singleton. Now we use Theorem 7.53 of \cite{Knapp} and observe that $\sM\subset\exp(i\fa)$. Hence for any $m\in\sM$ there exists $X_m\in\fa$ such that $m=\exp(iX_m)$. It follows that for any $X\in\sV^\lambda$, we have 
		\begin{equation}\label{eq.exp}
			\tR_mX=\exp(i\tdR(X_m))X=\exp(i\lambda(X_m))X.
		\end{equation}
		Also, as $X$ and $\tR_mX$ lie inside the real part, we obtain that $\exp(i\lambda(X_m))\in\R$. Therefore, $\tR_mX\in\sV^\lambda$ if and only if $X\in\sV^\lambda$. 
		
		Finally, using the definition of $\sV^0$ we obtain that
		\[\sV^0\supset\{X\in\sV\mid \tR_g(X)=X\text{ for all } g\in\sM\sA\}.\] Also, using Equation \ref{eq.exp}, for $X\in\sV^0$, we get
		$\tR_mX=\exp(0)X=X$, and using Remark \ref{rem.weight} we obtain that
		$\tR_gX=X$ for all $g\in\sM\sA$. Hence, 
		\[\sV^0\subset\{X\in\sV\mid \tR_g(X)=X\text{ for all } g\in\sM\sA\}\]
		and we conclude our result.
	\end{proof}
	
	\begin{lemma}\label{lem.ue}
		Suppose $\sG$ satisfy Hypothesis \ref{hyp.1} and let $g\in\sG$ be a loxodromic element. Then the dimension of the unit eigenspace of $\tR_g$ is at least $\dim \sV^0$. Moreover, the set of loxodromic elements $h\in\sG$ such that the dimension of the unit eigenspace of $\tR_h$ is exactly $\dim \sV^0$, is a non-empty open dense subset of $\sG$.
	\end{lemma}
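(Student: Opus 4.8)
The plan is to transport the spectral analysis of $\tR_g$ onto the $\fa$-weight spaces via the Jordan decomposition and then invoke Lemma~\ref{lem.eigen}. For the lower bound I would start from Proposition~\ref{prop.dang}: since $g$ is loxodromic, $g=h_g m_g\texp(\tJd_g)h_g^{-1}$ with $m_g\in\sM$ and $\tJd_g\in\fa^{++}$, so $\tR_g$ is conjugate, via $\tR_{h_g}$, to $\tR_{m_g\texp(\tJd_g)}$, and the two operators have unit eigenspaces of equal dimension. Because $m_g\texp(\tJd_g)\in\sM\sA$, Lemma~\ref{lem.eigen} gives that $\tR_{m_g\texp(\tJd_g)}$ restricts to the identity on $\sV^0$, so $\sV^0$ lies inside the unit eigenspace of $\tR_{m_g\texp(\tJd_g)}$. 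Transporting back by $\tR_{h_g}$ shows the unit eigenspace of $\tR_g$ has dimension at least $\dim\sV^0$, which is the first assertion.

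For the second assertion I would first pin down exactly when equality holds for an element of $\sM\sA$. By Lemma~\ref{lem.weight}, $\tR_{\texp(H)}$ acts on $\sV^\lambda$ by the positive scalar $\exp(\lambda(H))$; and since $\sG$ is split, $\sM$ is finite, so (as in the proof of Lemma~\ref{lem.eigen}) $\tR_m$ preserves each $\sV^\lambda$ and acts there with eigenvalues in $\{\pm1\}$. Hence on $\sV^\lambda$ with $\lambda\neq0$ the eigenvalues of $\tR_{m\texp(H)}$ are $\pm\exp(\lambda(H))$, and as $\exp(\lambda(H))>0$ a unit eigenvalue can occur only if $\lambda(H)=0$. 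Consequently, for every $m\in\sM$ and every $H$ in the set $\fa^{++}\setminus\bigcup_{\lambda\in\Omega}\ker\lambda$, the unit eigenspace of $\tR_{m\texp(H)}$ is precisely $\sV^0$. Taking $m=e$ produces a loxodromic element whose unit eigenspace has dimension exactly $\dim\sV^0$, which establishes non-emptiness.

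Openness I would obtain from semicontinuity: $h\mapsto\mathrm{rank}(\tR_h-\mathrm{Id})$ is lower semicontinuous, so $h\mapsto\dim\ker(\tR_h-\mathrm{Id})$ is upper semicontinuous and $\{h:\dim\ker(\tR_h-\mathrm{Id})\le\dim\sV^0\}$ is open. The loxodromic locus is open because $\tJd$ is continuous (Remark~\ref{rem.jdcont}) and $\fa^{++}$ is open; by the first assertion our set is exactly the intersection of these two open sets, hence open. For density I would use that any loxodromic $h_0$ is conjugate to some $m_0\texp(H_0)$ with $H_0\in\fa^{++}$; since $\fa^{++}\setminus\bigcup_{\lambda\in\Omega}\ker\lambda$ is dense in $\fa^{++}$, perturbing $H_0$ into it and conjugating back produces elements of our set arbitrarily close to $h_0$. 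Thus the set is dense in the loxodromic locus, and since the latter is a nonempty open—hence Zariski dense—subset of the connected group $\sG$, our set is Zariski dense in $\sG$ as well.

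The main obstacle is the precise eigenvalue bookkeeping on the nonzero weight spaces: one must control the action of the finite group $\sM$, whose eigenvalues on each $\sV^\lambda$ are $\pm1$, simultaneously with the positive scalar action of $\sA$, in order to conclude that away from the weight hyperplanes no unit eigenvalue survives on $\sV^{\neq0}$. A secondary subtlety lies in the density statement: the loxodromic locus is not dense in the ordinary topology, so one must interpret the claim through its Zariski density, using that openness together with ordinary density inside the loxodromic locus already forces Zariski density in $\sG$.
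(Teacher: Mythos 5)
Your proof is correct and follows essentially the same route as the paper: the lower bound via Proposition \ref{prop.dang} and Lemma \ref{lem.eigen}, and generic equality by observing that for $H\in\fa^{++}$ off the weight hyperplanes the eigenvalues $\pm\exp(\lambda(H))$ of $\tR_{m\exp(H)}$ on $\sV^{\neq0}$ cannot equal $1$, followed by perturbation within $\fa$. You are in fact more careful than the paper on the topological bookkeeping: the paper only notes that $\fa$ minus the root and weight hyperplanes is open dense and leaves the transfer to $\sG$ implicit, whereas you supply the semicontinuity argument for openness and correctly point out that the set is Euclidean-dense only inside the loxodromic locus, so the density assertion in $\sG$ must be read in the Zariski sense.
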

	\begin{proof}
		We use Proposition \ref{prop.dang} and Lemma \ref{lem.eigen} to deduce that the unit eigenspace of $\tR_g$ for a loxodromic element $g$ is at least $\dim \sV^0$.
		
		Suppose $Y\in\fa$ is such that $\alpha(Y)\neq0$ for all $\alpha\in\Sigma$ and $\lambda(Y)\neq0$ for all $\lambda\in\Omega$, then by Remark \ref{rem.weight}, we get that $\tR(\exp(Y))X=\exp(\lambda(Y))X$. We again use Theorem 7.53 of \cite{Knapp} and the property that $\sG$ is split to conclude that for any $m\in\sM$ there exists $X_m\in\fa$ such that $m=\exp(iX_m)$ and
		\[\tR_mX=\exp(i\lambda(X_m))X\]
		with $\exp(i\lambda(X_m))\in\R$. Hence, $\exp(i\lambda(X_m))=\pm1$, and for any $X\in\sV^\lambda$ with $\lambda\neq0$ we have
		\[\tR(m\exp(Y))X=\exp(\lambda(Y))\tR_mX=\pm\exp(\lambda(Y))X\neq X.\]
		Also, using Lemma \ref{lem.eigen} we obtain that $\tR_g(X)=X$  for all $g\in\sM\sA$ and for all $X\in\sV^0$.
		Therefore, our result follows by using Remark \ref{rem.jdcont}, Proposition \ref{prop.dang} and observing that the set 
		\[\fa\setminus\left(\bigcup_{\alpha\in\Sigma}\ker(\alpha)\cup\bigcup_{\lambda\in\Omega}\ker(\lambda)\right)\]
		is a non-empty open dense subset of $\fa$, as the sets $\Sigma$ and $\Omega$ are finite.
	\end{proof}
	
	We note that the action of $\tAd(\sG)\subset\mathsf{GL}(\mathfrak{sl}(n,\R))\subset\mathsf{GL}(\mathfrak{sl}(n,\C))$ is also irreducible on $\mathfrak{sl}(n,\C)$. More generally, as $\sG$ is real split and $\tR$ is irreducible, using Proposition 26.23 of \cite{FultonHarris} and the tables contained in Reference Chapter 1.3 of \cite{OniVin} (see also Appendix B of \cite{RanCui} for a corrected version) we note that the action of $\tR(\sG)\subset\mathsf{GL}(\sV)\subset\mathsf{GL}(\sV\otimes\C)$ is also irreducible on $\sV\otimes\C$. Such a representation is called \emph{absolutely irreducible}. 
	
	Moreover, the adjoint representation of $\mathsf{PSL}(n,\R)$ on $\mathfrak{sl}(n,\R)$ preserves a natural symmetric bilinear form $\tB_n$ called the \emph{Killing form}. Explicitly, for any $X,Y\in\mathfrak{sl}(n,\R)$, $\tB_n(X,Y)=2n\tr(XY)$. In general, there is no guarantee that $\tR$ preserves a bilinear form on $\sV$. The technical condition needed to guarantee the existence of such a bilinear form on $\sV$ is called self-contragredience. We call a representation $\tR$ \textit{self-contragredient} if $\tR$ is conjugate to its dual $(\tR^t)^{-1}$ (for more details see Section 3.11 of \cite{Samel}). Further, since by our assumption, zero is a weight of $\tR$, by Lemma 1.3 of \cite{Gros1}, these self-contragredient representations $\tR$ admit an invariant \emph{symmetric} bilinear form $\tB_\tR$. Hence, $\sG\subset\mathsf{O}(\tB_\tR)\subset\mathsf{GL}(\sV)$. We note that the signature of this symmetric bilinear form $\tB_\tR$ is $(p,q)$ where $p,q$ are both non-zero. Henceforth, we denote $\tB_\tR(X,X)$ by $\tB_\tR(X)$ for notational simplicity and call it the \emph{norm} of $X$.
	
	\section{Algebraicity}\label{sec.alg}
	In this section we define Margulis invariants $\tM(g,X)$ for $g\in\sG$ and $X\in\sV$. Moreover, we show that Margulis invariants are themselves not rational expressions in the variables $(g,X)$, but when the representation of $\sG$ on $\sV$ admits an invariant symmetric bilinear form, then polynomials constructed out of $\tM(g,X)$ are rational expressions in the variables $(g,X)$.

	\subsection{Characteristic polynomial}
	Let $\tA\in\mathfrak{gl}(\sV)$ and let $\tI\in\mathfrak{gl}(\sV)$ be the diagonal matrix with all its diagonal entries equal to $1$. Then the \emph{characteristic polynomial} of $\tA$ in the indeterminate $x$ is defined by the following expression:
	\[\chi_{\tA}(x)\defeq\det(x\tI-\tA).\]
	\begin{theorem}[Cayley--Hamilton, see \cite{FCH}]\label{thm.ch}
		Let $\tA\in\mathfrak{gl}(\sV)$. Then \[\chi_{\tA}(\tA)=0.\]
	\end{theorem}
	Suppose $(\sG,\sV,\tR)$ satisfy Hypothesis \ref{hyp.1} and let $g\in\sG$. We alternately denote $\tR(g)$ by $\tR_g$. Then $\tR_g\in\mathsf{GL}(\sV)\subset\mathfrak{gl}(\sV)$. Hence $(\tR_e-\tR_g)\in\mathfrak{gl}(\sV)$. Also we observe that $\tR_e=\tI$. 
	
	\begin{remark}
		One can use the Cayley--Hamilton Theorem \ref{thm.ch} to deduce that the characteristic polynomial of $(\tR_e-\tR_g)$ has the following expression:
		\[\sum_{k=0}^{\dim\sV}(-1)^{\dim\sV-k}\tr(\wedge^{\dim\sV-k}(\tR_e-\tR_g))x^k.\]
		Hence the coefficients of the characteristic polynomial are also algebraic.
	\end{remark}
	
	Let $\R[x]$ be set of all polynomials in the indeterminate $x$ and note that $\R[x]$ is a principal ideal domain i.e. any ideal is generated by a single polynomial, which is unique up to units in $\R[x]$. Now for $\tA\in\mathfrak{gl}(\sV)$ we consider:
	\[\mathsf{I}_\tA:=\{p(x)\in\R[x]\mid p(\tA)=0\}\]
	and observe that $\mathsf{I}_\tA$ is a proper ideal of $\R[x]$. 
	
	\begin{definition}
		The minimal polynomial of $\tA\in\mathfrak{gl}(\sV)$ is the unique monic polynomial which generates $\mathsf{I}_\tA$. It is the monic polynomial of least degree inside $\mathsf{I}_\tA$.
	\end{definition}

	\begin{remark}\label{rem.mp}
		We observe that by definition the minimal polynomial of $\tA$ divides the characteristic polynomial of $\tA$. In fact, one can deduce that the minimal polynomial of $\tA$ and the characteristic polynomial of $\tA$ have the same irreducible factors in $\R[x]$. Moreover, when $g$ is loxodromic, using Proposition 14 in Chapter 7.5.8 of \cite{B4-7} we get that the minimal polynomial of $(\tR_e-\tR_g)$ has no multiple factors. Equivalently, $\tR_g$ is diagonalizable over $\C$ when $g\in\sG$ is loxodromic (see also Theorem 2.4.8 (ii) of \cite{Sp}).
	\end{remark}
	
	\begin{proposition} \label{prop.p}
		Let $g\in\sG$ be loxodromic. Then $x^{\dim\sV^0}$ divides the characteristic polynomial of $(\tR_e-\tR_g)$. We denote the quotient polynomial by $\tP_g(x)$.
		
		Moreover, let $\tP_g(x)=\sum_{k=0}^{\dim(\sV)-\dim(\sV^0)}a_k(g)x^k$. Then the coefficient $a_k(g)$, for any $k\in\{0,1,..,\dim\sV\}$, is algebraic in $g$.
	\end{proposition}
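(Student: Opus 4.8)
The plan is to establish the two assertions in turn, using the diagonalizability of $\tR_g$ together with the lower bound on the unit eigenspace proved in Lemma \ref{lem.ue}.

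First I would reduce the claim that $\tP_g$ is a polynomial to the divisibility $x^{\dim\sV^0}\mid\tCP_g(x)$, that is, to the statement that $0$ is a root of $\tCP_g$ of multiplicity at least $\dim\sV^0$; equivalently, that the algebraic multiplicity of $0$ as an eigenvalue of $(\tR_e-\tR_g)$ is at least $\dim\sV^0$. Since $g$ is loxodromic, Lemma \ref{lem.diag} gives that $\tR_g$ is diagonalizable over $\C$, and hence so is $\tR_e-\tR_g$: its eigenvectors coincide with those of $\tR_g$, and the eigenvalue $\mu$ of $\tR_g$ corresponds to the eigenvalue $1-\mu$ of $\tR_e-\tR_g$. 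For a diagonalizable operator the algebraic and geometric multiplicities of every eigenvalue coincide, so the multiplicity of $0$ for $(\tR_e-\tR_g)$ equals $\dim\ker(\tR_e-\tR_g)$, which is precisely the dimension of the unit eigenspace of $\tR_g$. By Lemma \ref{lem.ue} this dimension is at least $\dim\sV^0$, so $x^{\dim\sV^0}$ divides $\tCP_g(x)$ and $\tP_g(x)=\tCP_g(x)/x^{\dim\sV^0}$ is a (monic) polynomial of degree $\dim\sV-\dim\sV^0=\dim(\sV^{\neq0})$.

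For the second assertion I would note that dividing by $x^{\dim\sV^0}$ merely shifts indices: writing $\tCP_g(x)=\sum_{j=0}^{\dim\sV}c_j(g)x^j$, the divisibility just shown forces $c_j(g)=0$ for $j<\dim\sV^0$, and $a_k(g)=c_{k+\dim\sV^0}(g)$ (with the convention that $a_k(g)=0$ once $k+\dim\sV^0$ exceeds $\dim\sV$). Thus every $a_k(g)$ is, up to the trivial vanishing beyond the degree, one of the coefficients of $\tCP_g$, and by the exterior-power formula recalled in the Remark following Theorem \ref{thm.ch} each such coefficient equals, up to sign, $\ttr(\wedge^{m}(\tR_e-\tR_g))$ for a suitable $m$. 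Since $\tR$ is an algebraic representation by Convention \ref{conv.0}, the entries of $\tR_g$ are regular functions of $g$; hence so are the entries of $\tR_e-\tR_g$ and of each of its exterior powers, and the trace is a polynomial in these entries. Therefore each coefficient, and in particular each $a_k(g)$, is algebraic in $g$.

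The only point requiring genuine care is the passage from divisibility to multiplicities: one must not conflate the dimension of the unit eigenspace (a geometric multiplicity) with the order of vanishing of $\tCP_g$ at $0$ (an algebraic multiplicity), and the argument really does use diagonalizability to identify the two. I expect this to be the main, though modest, obstacle; the remainder is bookkeeping of the coefficients and the observation that algebraicity is inherited directly from the algebraic representation $\tR$.
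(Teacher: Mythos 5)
Your proposal is correct and follows essentially the same route as the paper: both arguments derive the divisibility $x^{\dim\sV^0}\mid\tCP_g(x)$ from Lemma \ref{lem.ue} together with the diagonalizability of $\tR_g$ from Lemma \ref{lem.diag}, and both obtain algebraicity of the coefficients from the exterior-power expression $\pm\ttr(\wedge^{m}(\tR_e-\tR_g))$ and the algebraicity of $\tR$. If anything, your explicit remark that diagonalizability is what identifies the geometric multiplicity of the unit eigenspace with the order of vanishing of $\tCP_g$ at $0$ is slightly more careful than the paper's phrasing of the same step.
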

	\begin{proof}
		Using Lemma \ref{lem.ue} we obtain that $\tR_g$ has an eigenvalue $1$ with multiplicity at least $\dim\sV^0$. Hence, $(\tR_e-\tR_g)$ has at least $\dim\sV^0$ many $0$ as eigenvalues. As $\tR$ is algebraic, we conclude by observing that \[a_k(g)=(-1)^{n-k-\dim\sV^0}\tr(\wedge^{n-k-\dim\sV^0}(\tR_e-\tR_g))\] is algebraic in $g$ for all $k$.
	\end{proof}
	
	\begin{remark}\label{rem.chired}
		Suppose $g\in\sG$, $n_0:=\dim\sV^0$, $n:=\dim\sV$  and $0\leq k\leq n-n_0=\dim(\sV^{\neq0})$. We consider $a_k(g):=(-1)^{n-n_0-k}\tr(\wedge^{n-n_0-k}(\tR_e-\tR_g))$ and define
		\[\chi^{red}_g(x):=\sum_{k=0}^{\dim(\sV^{\neq0})}a_k(g)(1-x)^k.\]
		We note that for any loxodromic element $g\in\sG$ we have $\chi^{red}_g(x)=\tP_g(1-x)$ and $\chi^{red}_e(x)=(1-x)^{n-n_0}$. Clearly $\chi^{red}_e(\tR_e)=0$.
	\end{remark}
	
	\begin{lemma}\label{lem.reg}
		Let $g\in\sG$ be a loxodromic element such that the dimension of the unit eigenspace of $\tR_g$ is exactly $\dim \sV^0$. Then $\chi^{red}_g(1)\neq0$.
	\end{lemma}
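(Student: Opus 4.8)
The plan is to reduce the claim $\tP_g(0)\neq0$ to a statement about multiplicities: since $\tP_g(x)=\tCP_g(x)/x^{\dim\sV^0}$ by Proposition \ref{prop.p}, we have $\tP_g(0)\neq0$ if and only if $x^{\dim\sV^0}$ is the \emph{exact} power of $x$ dividing $\tCP_g(x)$, i.e. if and only if the multiplicity of $x=0$ as a root of $\tCP_g$ is precisely $n_0\defeq\dim\sV^0$. Because $\tCP_g$ is the characteristic polynomial of $(\tR_e-\tR_g)$, the root $x=0$ corresponds exactly to the eigenvalue $0$ of $(\tR_e-\tR_g)$, which in turn is the eigenvalue $1$ of $\tR_g$. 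Hence the multiplicity of $x=0$ as a root of $\tCP_g$ equals the algebraic multiplicity of the eigenvalue $1$ of $\tR_g$, i.e. the algebraic multiplicity of the unit eigenspace of $\tR_g$.

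The key step is to pass from this algebraic multiplicity to the actual dimension of the unit eigenspace. First I would invoke Lemma \ref{lem.diag} to conclude that $\tR_g$ is diagonalizable over $\C$, and therefore so is $(\tR_e-\tR_g)$. For a diagonalizable endomorphism the algebraic multiplicity of every eigenvalue coincides with its geometric multiplicity, so the algebraic multiplicity of the eigenvalue $1$ of $\tR_g$ equals $\dim\ker(\tR_e-\tR_g)$, which is exactly the dimension of the unit eigenspace of $\tR_g$. By hypothesis this dimension is exactly $\dim\sV^0=n_0$. Combining the two observations, the multiplicity of $x=0$ as a root of $\tCP_g(x)$ is exactly $n_0$, so $x^{n_0}$ divides $\tCP_g(x)$ but $x^{n_0+1}$ does not, and therefore $\tP_g(0)\neq0$.

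Alternatively, one could argue directly from the coefficient formula of Proposition \ref{prop.p}, which gives $\tP_g(0)=a_0(g)=(-1)^{n-n_0}\ttr(\wedge^{n-n_0}(\tR_e-\tR_g))$; this is, up to sign, the $(n-n_0)$-th elementary symmetric function of the eigenvalues $\mu_1,\dots,\mu_n$ of $(\tR_e-\tR_g)$. Under the hypothesis exactly $n_0$ of the $\mu_i$ vanish, so the only nonvanishing monomial in that symmetric function is the product of the $n-n_0$ nonzero eigenvalues, whence $a_0(g)\neq0$. I do not expect a substantive obstacle here; the one point requiring genuine care is the identification of algebraic with geometric multiplicity, which is exactly where the diagonalizability of $\tR_g$ (Lemma \ref{lem.diag}) is indispensable, and which is what forces the hypothesis on the dimension of the unit eigenspace to translate into an \emph{exact} order of vanishing rather than merely a lower bound.
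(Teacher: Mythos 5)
Your proof is correct and is essentially the paper's argument: the paper's own proof is a one-line appeal to Lemma \ref{lem.diag} and the decomposition $\sV=\sV^0\oplus\sV^{\neq0}$, and your write-up simply fills in the details — diagonalizability of $\tR_g$ over $\C$ forces the algebraic multiplicity of the eigenvalue $0$ of $\tR_e-\tR_g$ to equal the (geometric) dimension of the unit eigenspace, which by hypothesis is exactly $\dim\sV^0$, so the order of vanishing of $\tCP_g$ at $0$ is exactly $\dim\sV^0$ and $\tP_g(0)\neq0$. No gaps; the identification of algebraic with geometric multiplicity is indeed the one point where Lemma \ref{lem.diag} is needed, just as you note.
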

	\begin{proof}
		We use Remark \ref{rem.mp} and the property that $\sV=\sV^0\oplus\sV^{\neq0}$ to conclude our result.
	\end{proof}

	\subsection{Unit eigenspace projection}
	Let $\pi_0$ be the projection onto the $\sV^0$ component with respect to the decomposition: $\sV=\sV^0\oplus\sV^{\neq0}$.
	
	\begin{lemma}\label{lem.id}
		Let $g\in\sG$ be a loxodromic element and let $\chi^{red}_g$ be as in Remark \ref{rem.chired}. Then, \[(\tR_e-\tR_g)\chi^{red}_g(\tR_g)=0.\]
	\end{lemma}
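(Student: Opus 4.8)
The plan is to combine the factorization of $\tCP_g$ built into the definition of $\tP_g$ with the diagonalizability of $\tR_g$. Write $\tA:=\tR_e-\tR_g\in\mathfrak{gl}(\sV)$, so that the claim is $\tA\,\tP_g(\tA)=0$. By Proposition \ref{prop.p} we have the polynomial identity $\tCP_g(x)=x^{\dim\sV^0}\tP_g(x)$; my aim is to show that the polynomial $x\,\tP_g(x)$ annihilates $\tA$, a divisibility fact that becomes transparent once one diagonalizes.

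First I would pass to the complexification $\sV\otimes\C$. Since $g$ is loxodromic, Lemma \ref{lem.diag} gives that $\tR_g$, and hence $\tA=\tR_e-\tR_g$, is diagonalizable over $\C$, so $\sV\otimes\C=\bigoplus_\lambda E_\lambda$ is the direct sum of the $\tA$-eigenspaces, with $\lambda$ ranging over the distinct eigenvalues of $\tA$. On $E_\lambda$ the operator $\tA$ acts as the scalar $\lambda$, so any polynomial $p$ satisfies $p(\tA)|_{E_\lambda}=p(\lambda)\,\mathrm{id}$. I would then check that $\tA\,\tP_g(\tA)$ vanishes on each $E_\lambda$. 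If $\lambda=0$, then $\tA$ already acts as $0$ on $E_\lambda$, so $\tA\,\tP_g(\tA)$ does too, irrespective of the value $\tP_g(0)$. If $\lambda\neq0$, then $\lambda$ is a root of $\tCP_g$ of some multiplicity $n_\lambda\geq1$; since dividing by $x^{\dim\sV^0}$ removes only factors of $x$ and $\lambda\neq0$, the factor $(x-\lambda)$ persists in $\tP_g$, whence $\tP_g(\lambda)=0$ and again $\tA\,\tP_g(\tA)|_{E_\lambda}=0$. As the $E_\lambda$ span $\sV\otimes\C$, this yields $\tA\,\tP_g(\tA)=0$, which is then the desired identity over $\R$ as well since $\tA$ and $\tP_g$ are real.

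An essentially equivalent route, which I would at least mention, is to argue through the minimal polynomial: by Remark \ref{rem.mp} loxodromy forces $\tMP_g$ to be squarefree with the same irreducible factors as $\tCP_g$, and since $\sV^0\neq0$ the variable $x$ divides $\tMP_g$ exactly once; one then verifies the polynomial divisibility $\tMP_g(x)\mid x\,\tP_g(x)$ and evaluates at $\tA$ using $\tMP_g(\tA)=0$. The only point needing care in either approach is the bookkeeping that the unit eigenspace of $\tR_g$ may be \emph{strictly} larger than $\sV^0$ (so $\tP_g$ may still carry a positive power of $x$); diagonalizability---equivalently, squarefreeness of $\tMP_g$---is precisely what ensures that a single factor of $\tA=\tR_e-\tR_g$ then suffices to annihilate. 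I expect this to be the only mild obstacle, the remainder being routine.
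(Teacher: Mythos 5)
Your proof is correct and rests on the same two pillars as the paper's: diagonalizability of $\tR_g$ over $\C$ (Lemma \ref{lem.diag}) and the fact that passing from $\tCP_g$ to $\tP_g$ only strips factors of $x$, so every nonzero eigenvalue of $\tR_e-\tR_g$ remains a root of $\tP_g$. The paper phrases this via the minimal polynomial after first conjugating $g$ into $\sM\sA$ --- exactly the ``essentially equivalent route'' you sketch at the end --- so your direct eigenspace computation over $\sV\otimes\C$ is just a cleaner presentation of the same argument.
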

	\begin{proof}
		As $g$ is loxodromic, using Remark \ref{rem.mp}, we obtain that $\tR_g$ is diagonalizable over $\C$. It follows that the minimal polynomial of $(\tR_e-\tR_g)$ is a product of distinct monic linear factors and hence is divisible by $x$ but not by $x^2$ (See Remark \ref{rem.mp}). Also the minimal polynomial of $(\tR_e-\tR_g)$ divides the characteristic polynomial of $(\tR_e-\tR_g)$ and hence the minimal polynomial of $(\tR_e-\tR_g)$ divides $x\tP_g(x)$. We also know that the minimal polynomial of $(\tR_e-\tR_g)$ evaluated at $(\tR_e-\tR_g)$ is $0$ and it follows that $(\tR_e-\tR_g)\chi^{red}_g(\tR_g)=0$. 
	\end{proof}
	\begin{proposition}\label{prop.parpro}
		Let $c\in\sM\sA$ and let $\chi^{red}_c$ be as in Remark \ref{rem.chired}. Then, for any $X\in\sV$:
		\[\chi^{red}_c(\tR_c)X=\chi^{red}_c(1)\pi_0(X).\]
	\end{proposition}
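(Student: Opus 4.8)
The plan is to diagonalize $\tR_c$ along the weight space decomposition $\sV=\sV^0\oplus\bigoplus_{\lambda\in\Omega}\sV^\lambda$ and then evaluate the polynomial $\tP_c$ on the resulting eigenvalues. Writing $c=ma$ with $m\in\sM$ and $a\in\sA$, I would first observe, using Lemma \ref{lem.weight} for the $\sA$-factor and the identity $\tR_m X=\exp(i\lambda(X_m))X$ established in the proof of Lemma \ref{lem.eigen} for the $\sM$-factor, that $\tR_c$ preserves each $\sV^\lambda$ and acts there as a single scalar $\mu_\lambda$, with $\mu_0=1$ by Lemma \ref{lem.eigen}. Consequently $(\tR_e-\tR_c)$ acts on $\sV^\lambda$ as the scalar $1-\mu_\lambda$, and in particular as $0$ on $\sV^0$. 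Since the $\sV^\lambda$ span $\sV$, this simultaneously shows that $\tR_c$ is diagonalizable over $\C$, so that $\tP_c$ is genuinely defined, i.e. $x^{\dim\sV^0}$ divides $\tCP_c$ exactly as in Proposition \ref{prop.p}.

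Next I would decompose an arbitrary $X\in\sV$ as $X=\pi_0(X)+\sum_{\lambda\in\Omega}X^\lambda$ with $X^\lambda\in\sV^\lambda$. Applying $\tP_c(\tR_e-\tR_c)$ summand by summand and using the scalar action of $(\tR_e-\tR_c)$ on each piece gives
\[\tP_c(\tR_e-\tR_c)X=\tP_c(0)\pi_0(X)+\sum_{\lambda\in\Omega}\tP_c(1-\mu_\lambda)X^\lambda.\]
Thus the statement reduces to verifying that $\tP_c(1-\mu_\lambda)=0$ for every restricted weight $\lambda\in\Omega$, after which the displayed sum collapses to $\tP_c(0)\pi_0(X)$.

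To establish this vanishing I would factor the characteristic polynomial as $\tCP_c(x)=x^d\,q(x)$ with $q(0)\neq0$, where $d$ is the multiplicity of the eigenvalue $0$ of $(\tR_e-\tR_c)$, equivalently the dimension of the unit eigenspace of $\tR_c$; by Lemma \ref{lem.eigen} one has $d\geq\dim\sV^0$, and the nonzero roots of $\tCP_c$ are precisely the roots of $q$. By definition $\tP_c(x)=x^{\,d-\dim\sV^0}q(x)$. Now fix $\lambda\in\Omega$. If $\mu_\lambda\neq1$, then $1-\mu_\lambda$ is a nonzero root of $\tCP_c$, hence a root of $q$, so $\tP_c(1-\mu_\lambda)=0$. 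If instead $\mu_\lambda=1$, then $\sV^\lambda$ lies in the unit eigenspace of $\tR_c$ but not in $\sV^0$, forcing $d>\dim\sV^0$; hence $\tP_c(0)=0$, and since $1-\mu_\lambda=0$ we again get $\tP_c(1-\mu_\lambda)=\tP_c(0)=0$.

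The only delicate point — the step I would single out as the main obstacle — is the bookkeeping in the case $\mu_\lambda=1$ for a nonzero weight $\lambda$: there $\tP_c(0)$ itself vanishes, so one cannot argue that $1-\mu_\lambda$ is a surviving root of $\tP_c$, but must instead exploit that the extra unit eigenspace inflates $d$ beyond $\dim\sV^0$. Carrying the explicit factorization $\tP_c=x^{\,d-\dim\sV^0}q$ through the argument handles both cases uniformly and, notably, avoids any implicit assumption that the unit eigenspace of $\tR_c$ coincides with $\sV^0$.
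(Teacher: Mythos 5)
Your proof is correct, and it takes a genuinely different route from the paper's. The paper argues in two cases according to whether $\ker(\tR_e-\tR_c)$ equals $\sV^0$ or strictly contains it: in the first case it shows $(\tR_e-\tR_c)$ is invertible on $\sV^{\neq0}$, so $X-\pi_0(X)$ lies in the image of $(\tR_e-\tR_c)$ and is killed by the operator identity $(\tR_e-\tR_c)\tP_c(\tR_e-\tR_c)=0$ of Lemma \ref{lem.id}, while the $\sV^0$-component is evaluated directly; in the second case it invokes the minimal polynomial (via Remark \ref{rem.mp}, using that $\tMP_c$ is squarefree and that $x$ now divides $\tP_c$) to conclude that $\tP_c(\tR_e-\tR_c)$ vanishes identically, matching $\tP_c(0)=0$. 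You instead diagonalize $\tR_c$ completely along the weight spaces, reduce everything to the scalar identities $\tP_c(1-\mu_\lambda)=0$, and verify these by root-counting in the explicit factorization $\tCP_c=x^d q$. Your case split on $\mu_\lambda=1$ versus $\mu_\lambda\neq1$ mirrors the paper's dichotomy, but the mechanics differ: you never need Lemma \ref{lem.id}, the minimal polynomial, or the Bourbaki input behind Remark \ref{rem.mp}, only the fact (drawn from Lemma \ref{lem.weight} and the displayed identity in the proof of Lemma \ref{lem.eigen}) that $\tR_c$ acts by a single real scalar on each $\sV^\lambda$ — a slightly stronger input than the paper uses, which only requires $\sV^0\subset\ker(\tR_e-\tR_c)$, invariance of $\sV^{\neq0}$, and diagonalizability. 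A genuine side benefit of your approach is that it shows directly that $x^{\dim\sV^0}$ divides $\tCP_c$ for \emph{every} $c\in\sM\sA$, so $\tP_c$ is well defined even when $c$ is not loxodromic; the paper's Proposition \ref{prop.p} is stated only for loxodromic elements, so this point is left implicit there.
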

	\begin{proof}
		As $\tR_c$ is diagonalizable over complex numbers, $\tR_e-\tR_c$ is also diagonalizable over complex numbers. Moreover, as $\tR_cZ=Z$ for all $Z\in\sV^0$, we have $\sV^0\subset\ker(\tR_e-\tR_c)$. We will prove our result in two separate cases:
		\begin{enumerate}
			\item $\sV^0\neq\ker(\tR_e-\tR_c)$: In this case, $\tP_c(x)$ is divisible by $x$. Hence $\tP_c(x)$ is divisible by the minimal polynomial of $(\tR_e-\tR_c)$. Moreover, the minimal polynomial of $(\tR_e-\tR_c)$ evaluated at $(\tR_e-\tR_c)$ is $0$ and we obtain that $\chi^{red}_c(\tR_c)=0$. Also, as $\tP_c(x)$ is divisible by $x$, we have $\chi^{red}_c(1)=0$. Therefore, 
			\[\chi^{red}_c(\tR_c)X=0=\chi^{red}_c(1)\pi_0(X).\]
			\item $\sV^0=\ker(\tR_e-\tR_c)$: In this case, given any $Y\in\sV^{\neq0}$, there exists a $Y^\prime\in\sV^{\neq0}$ such that $(\tR_e-\tR_c)Y^\prime=Y$. Indeed, as $\tR_c\sV^{\neq0}\subset\sV^{\neq0}$ we obtain that \[(\tR_e-\tR_c):\sV^{\neq0}\to\sV^{\neq0}\] 
			is a linear map with kernel $\sV^0\cap\sV^{\neq0}=\{0\}$, and hence $(\tR_e-\tR_c)$ is invertible on $\sV^{\neq0}$. Therefore, for any $X\in\sV$, there exists $Y\in\sV^{\neq0}$ such that $(X-\pi_0(X))=(\tR_e-\tR_c)Y$. It follows that \[\chi^{red}_c(\tR_c)(X-\pi_0(X))=\chi^{red}_c(\tR_c)(\tR_e-\tR_c)Y=0\] 
			and hence, for any $X\in\sV$ we have $\chi^{red}_c(\tR_c)X=\chi^{red}_c(\tR_c)\pi_0(X)$. Moreover, as $(\tR_e-\tR_c)\pi_0(X)=0$, we conclude by observing that 
			\begin{align*}
				\chi^{red}_c(\tR_c)\pi_0(X)&=\sum_{k=0}^{\dim(\sV^{\neq0})}a_k(c)(\tR_e-\tR_c)^k\pi_0(X)\\
				&=a_0(c)\pi_0(X)=\chi^{red}_c(1)\pi_0(X).
			\end{align*}
		\end{enumerate}
		Our result is complete.
	\end{proof}
	
	\begin{proposition}
		Let $g\in\sG$ be a loxodromic element such that the dimension of the unit eigenspace of $\tR_g$ is exactly $\dim \sV^0$. Then, the map \[\chi^{red}_g(1)^{-1}\chi^{red}_g(\tR_g):\sV\to\sV\] 
		is the projection onto the unit eigenspace of $\tR_g$ with respect to the eigenspace decomposition of $\tR_g$.
	\end{proposition}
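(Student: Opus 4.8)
The plan is to show directly that the operator $P:=\tP_g(0)^{-1}\tP_g(\tR_e-\tR_g)$ is idempotent with the correct kernel and image, rather than to compare eigenvalues one by one. First I would check that $P$ is well defined: since $g$ is loxodromic and the unit eigenspace of $\tR_g$ has dimension exactly $\dim\sV^0$, Lemma \ref{lem.reg} gives $\tP_g(0)\neq0$, so $\tP_g(0)^{-1}$ makes sense. The projection I must recover is associated with the eigenspace decomposition of $\tR_g$: writing $E_1:=\ker(\tR_e-\tR_g)$ for the unit eigenspace and decomposing $\sV$ into the eigenspaces of $\tR_g$, the target is the projection onto $E_1$ along the sum of the remaining eigenspaces. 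By Lemma \ref{lem.diag}, $\tR_g$ is diagonalizable over $\C$, hence so is $\tR_e-\tR_g$, and therefore $\sV=\ker(\tR_e-\tR_g)\oplus\operatorname{im}(\tR_e-\tR_g)$, where $\operatorname{im}(\tR_e-\tR_g)$ is precisely the sum of the eigenspaces of $\tR_g$ with eigenvalue $\neq1$. So it suffices to verify that $P$ acts as the identity on $E_1$ and as $0$ on $\operatorname{im}(\tR_e-\tR_g)$.

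For the first computation I would take $X\in E_1$, so that $(\tR_e-\tR_g)X=0$ and hence $(\tR_e-\tR_g)^kX=0$ for every $k\geq1$. Expanding $\tP_g$ as in Proposition \ref{prop.p}, every term with $k\geq1$ annihilates $X$ and only the constant term $a_0(g)=\tP_g(0)$ survives, giving $\tP_g(\tR_e-\tR_g)X=\tP_g(0)X$ and therefore $PX=X$. For the second computation I would take $X\in\operatorname{im}(\tR_e-\tR_g)$, write $X=(\tR_e-\tR_g)Y$, and use that powers of $\tR_e-\tR_g$ commute together with Lemma \ref{lem.id} to obtain
\[\tP_g(\tR_e-\tR_g)X=(\tR_e-\tR_g)\tP_g(\tR_e-\tR_g)Y=0,\]
so that $PX=0$.

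Combining these two computations with the direct sum decomposition, $P$ restricts to the identity on $E_1$ and annihilates the complementary summand, which is exactly the statement that $P$ is the projection onto the unit eigenspace $E_1$ along the sum of the non-unit eigenspaces, i.e. the projection onto the unit eigenspace with respect to the eigenspace decomposition of $\tR_g$. The only genuinely delicate point, and the one I would take care to justify, is the identification of $\operatorname{im}(\tR_e-\tR_g)$ with the span of the non-unit eigenspaces and the resulting splitting $\sV=\ker\oplus\operatorname{im}$; this is where diagonalizability (Lemma \ref{lem.diag}) is essential, since without it $\ker$ and $\operatorname{im}$ need not be complementary and $P$ need not be idempotent. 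Everything else is a routine polynomial manipulation resting on Lemmas \ref{lem.id} and \ref{lem.reg}. (Alternatively, one could reduce to the case $c\in\sM\sA$ via Proposition \ref{prop.dang}, note $\tP_g=\tP_c$ under conjugation, and transport Proposition \ref{prop.parpro}, but the direct argument above is more self-contained.)
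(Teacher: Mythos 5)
Your proof is correct, but it takes a genuinely different route from the paper. The paper reduces to an element $c=h^{-1}gh\in\sM\sA$ via Proposition \ref{prop.dang}, invokes Proposition \ref{prop.parpro} to identify $\tP_c(0)^{-1}\tP_c(\tR_e-\tR_c)$ with the weight-space projection $\pi_0$, transports back by conjugation to get $\tP_g(0)^{-1}\tP_g(\tR_e-\tR_g)=\tR_h\circ\pi_0\circ\tR_h^{-1}$, and then separately checks that $\tR_h\sV^0$ is the full unit eigenspace of $\tR_g$ using Lemmas \ref{lem.eigen} and \ref{lem.ue}. You instead work intrinsically with the single operator $\tR_e-\tR_g$: semisimplicity (Lemma \ref{lem.diag}) gives the splitting $\sV=\ker(\tR_e-\tR_g)\oplus\operatorname{im}(\tR_e-\tR_g)$, the constant-term computation handles the kernel, and Lemma \ref{lem.id} kills the image. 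Both arguments use the hypothesis on $\dim$ of the unit eigenspace only through Lemma \ref{lem.reg} to make $\tP_g(0)^{-1}$ meaningful. What your version buys is self-containedness and generality: it never mentions weight spaces, makes no case distinction (the two cases of Proposition \ref{prop.parpro} are absorbed into the kernel/image dichotomy), and would apply verbatim to any semisimple operator with the appropriate eigenvalue-one multiplicity. What the paper's version buys is the explicit formula $\tP_g(0)^{-1}\tP_g(\tR_e-\tR_g)=\tR_h\circ\pi_0\circ\tR_h^{-1}$ tying the projector to $\pi_0$ and to the conjugating element $h$, which is the shape in which this operator reappears in Proposition \ref{prop.marg} and the Margulis--Smilga invariant. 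You correctly flag the one delicate point, namely that $\ker$ and $\operatorname{im}$ are complementary only because of diagonalizability; making that explicit (e.g.\ via the minimal polynomial having $x$ as at most a simple factor, as in Remark \ref{rem.mp}) would complete the write-up.
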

	\begin{proof}
		We use Lemma \ref{lem.reg} and observe that $\tP_g(0)\neq0$. Hence the map $\chi^{red}_g(1)^{-1}\chi^{red}_g(\tR_g):\sV\to\sV$ is a well defined linear map. Moreover, as $g$ is loxodromic, we use Proposition \ref{prop.dang} and obtain that there exists $h\in\sG$ such that $c:=h^{-1}gh\in\sM\sA$. Now we use Proposition \ref{prop.parpro} and obtain that 
		\[\chi^{red}_c(1)^{-1}\chi^{red}_c(\tR_c)X=\pi_0(X)\]
		for all $X\in\sV$. Hence, $\chi^{red}_c(1)^{-1}\chi^{red}_c(\tR_c)=\pi_0$ is a projection operator projecting onto $\sV^0$. Moreover, as $c=h^{-1}gh$, we deduce that
		\[\chi^{red}_g(1)^{-1}\chi^{red}_g(\tR_g)=\tR_h\circ\pi_0\circ\tR_h^{-1}.\]
		It follows that $\chi^{red}_g(1)^{-1}\chi^{red}_g(\tR_g)$ is a projection onto the space $\tR_h\sV^0$. Therefore, we will be done once we show that $\tR_h\sV^0$ is the unit eigenspace of $\tR_g$. Finally, we observe that $\tR_cX=X$ if and only if $\tR_g\tR_hX=\tR_hX$ and conclude our result using Lemmas \ref{lem.eigen} and \ref{lem.ue}.
	\end{proof}

	\subsection{Margulis invariant}\label{subsec.marginv}
	
	In this subsection, we define the Margulis invariants corresponding to a faithful irreducible representation of a real split semisimple Lie group with trivial center. We also relate these invariants with the unit eigenspace projections introduced in the previous section. Henceforth, when there is no confusion, we will omit the subscript $\tR$ from $\sG\ltimes_\tR\sV$. We denote the affine action of $\GV$ on $\sV$ by $\tAf$ i.e. for any $(g,X)\in\GV$ and $Y\in\sV$ we have:
	\[\tAf_{(g,X)}Y:=\tR_gY+X.\]
	\begin{lemma}
		Let $(g,X)\in\GV$ be such that $g$ is loxodromic and $g_\tth$ be its hyperbolic part with respect to the Jordan decomposition. Let $h_1,h_2$ be such that
		\[h_1\exp(\tJd_g)h_1^{-1}=g_\tth=h_2\exp(\tJd_g)h_2^{-1}.\] 
		Then $\pi_0(\tR_{h_1}^{-1}X)=\pi_0(\tR_{h_2}^{-1}X)$.
	\end{lemma}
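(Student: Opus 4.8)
The plan is to show that the two elements $h_1, h_2 \in \sG$ both conjugate $\texp(\tJd_g)$ to the same hyperbolic part $g_\tth$, and then invoke the uniqueness portion of Dang's Proposition~\ref{prop.dang} (part (4)) to understand precisely how $h_1$ and $h_2$ differ. Concretely, both $h_1$ and $h_2$ realize $g_\tth$ as a conjugate of $\texp(\tJd_g)$, so by applying part (4) of Proposition~\ref{prop.dang} (taking for instance $h = h_2$ against the reference $h_1$) there exists a unique $c \in \sM\sA$ such that $h_2 = h_1 c$. The whole statement will then reduce to showing that pre-composing $\pi_0 \circ \tR_{h_1}^{-1}$ by $\tR_c^{-1}$ does not change its value, i.e. that $\pi_0 \circ \tR_c^{-1} = \pi_0$ as maps on $\sV$.

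**Next I would** establish that key identity $\pi_0 \circ \tR_c = \pi_0$ (equivalently $\pi_0 \circ \tR_c^{-1} = \pi_0$) for every $c \in \sM\sA$. This is where Lemma~\ref{lem.eigen} does the real work: it gives the decomposition $\sV = \sV^0 \oplus \sV^{\neq 0}$ with $\tR_c$ acting as the identity on $\sV^0$ (since $\tR_g X = X$ for all $g \in \sM\sA$ and $X \in \sV^0$) and preserving $\sV^{\neq 0}$ (since $\tR_g \sV^\lambda = \sV^\lambda$ for all $g \in \sM\sA$). Because $\pi_0$ is by definition the projection onto $\sV^0$ along $\sV^{\neq 0}$, and $\tR_c$ respects this direct-sum decomposition acting trivially on the first summand and stabilizing the second, the projection commutes with $\tR_c$ in the required one-sided sense: for any $X = X_0 + X_{\neq 0}$ we have $\tR_c^{-1} X = X_0 + \tR_c^{-1} X_{\neq 0}$ with $\tR_c^{-1} X_{\neq 0} \in \sV^{\neq 0}$, whence $\pi_0(\tR_c^{-1} X) = X_0 = \pi_0(X)$.

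**Assembling these,** I compute directly:
\[
\pi_0(\tR_{h_2}^{-1} X) = \pi_0\!\left(\tR_{(h_1 c)}^{-1} X\right) = \pi_0\!\left(\tR_c^{-1} \tR_{h_1}^{-1} X\right) = \pi_0\!\left(\tR_{h_1}^{-1} X\right),
\]
where the middle equality uses the homomorphism property $\tR_{h_1 c} = \tR_{h_1} \tR_c$ and hence $\tR_{h_1 c}^{-1} = \tR_c^{-1} \tR_{h_1}^{-1}$, and the final equality applies the identity $\pi_0 \circ \tR_c^{-1} = \pi_0$ to the vector $\tR_{h_1}^{-1} X \in \sV$. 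This gives exactly $\pi_0(\tR_{h_1}^{-1} X) = \pi_0(\tR_{h_2}^{-1} X)$, as desired.

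**The main obstacle** I anticipate is a bookkeeping subtlety in applying part (4) of Proposition~\ref{prop.dang}: that statement is phrased for decompositions of the form $g = h m \texp(\tJd_g) h^{-1}$ involving both an $\sM$-component $m$ and the conjugator $h$, whereas here we are only handed the weaker data $h_i \texp(\tJd_g) h_i^{-1} = g_\tth$ concerning the hyperbolic part alone. I would need to confirm that the uniqueness clause still yields $h_2 = h_1 c$ with $c \in \sM\sA$ from the equality of the two conjugations of $\texp(\tJd_g)$ — this amounts to checking that any element conjugating $\texp(\tJd_g)$ to itself lies in the centralizer $\sM\sA = Z_\sG(\sA)$, which follows since the centralizer of $\texp(\tJd_g)$ for $\tJd_g \in \fa^{++}$ is precisely $Z_\sG(\fa) = \sM\sA$. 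Once this centralizer identification is in place, the rest is the routine direct-sum computation above.
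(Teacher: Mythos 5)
Your proposal is correct and follows essentially the same route as the paper: both obtain $h_2=h_1c$ with $c\in\sM\sA$ from Proposition \ref{prop.dang} and then use Lemma \ref{lem.eigen} (that $\tR_c$ fixes $\sV^0$ pointwise and preserves $\sV^{\neq0}$) to conclude that $\pi_0\circ\tR_c^{-1}=\pi_0$, which the paper phrases equivalently as the componentwise computation $Z_1=Z_2$. Your extra remark about why the conjugators of $\texp(\tJd_g)$ differ by an element of the centralizer $\sM\sA$ is a reasonable clarification of a step the paper leaves implicit.
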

	\begin{proof}
		We recall that by Lemma \ref{lem.eigen}, for any $c\in\sM\sA$, we have $\tR_c\sV^{\neq0}=\sV^{\neq0}$ and $\tR_cX=X$ for any $X\in\sV^0$. 
		
		Also by Proposition \ref{prop.dang} there exist some $c\in\sM\sA$ such that $h_2=h_1c$. For $i\in\{1,2\}$, we denote the component of $\tR_{h_i}^{-1}X$ inside $\sV^{\neq0}$ by $Y_i$ and the component of $\tR_{h_i}^{-1}X$ inside $\sV^0$ by $Z_i$. As $h_2=h_1c$ we deduce that $(Y_1+Z_1)=\tR_c(Y_2+Z_2)$, and hence, \[\tR_cY_2-Y_1=Z_1-\tR_cZ_2=Z_1-Z_2.\] 
		We notice that $\left(\tR_cY_2-Y_1\right)\in\sV^{\neq0}$, $\left(Z_1-Z_2\right)\in\sV^0$ and $\sV^0\cap\sV^{\neq0}=\{0\}$. Therefore, $Z_1=Z_2$ and we conclude that $\pi_0(\tR_{h_1}^{-1}X)=\pi_0(\tR_{h_2}^{-1}X)$.
	\end{proof}
	\begin{lemma}
		Let $(g,X)\in\GV$ be such that $g$ is loxodromic and let $g_\tth$ be its hyperbolic part with respect to the Jordan decomposition. Let $h\in\sG$ be such that $g_\tth=h\exp(\tJd_g)h^{-1}$. Then, for any $Y\in\sV$ we have,
		\[\pi_0(\tR_h^{-1}(\tAf_{(g,X)}Y-Y))=\pi_0(\tR_h^{-1}X).\]
	\end{lemma}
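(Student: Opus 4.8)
The plan is to expand the affine action, peel off the translation term, and reduce the statement to the vanishing of a single projection. First I would use the definition $\tAf_{(g,X)}Y=\tR_gY+X$ together with $\tR_e=\tI$ to write
\[\tAf_{(g,X)}Y-Y=(\tR_g-\tR_e)Y+X.\]
Applying $\tR_h^{-1}$ and exploiting the linearity of the projection $\pi_0$, the left-hand side of the claim becomes
\[\pi_0\bigl(\tR_h^{-1}(\tR_g-\tR_e)Y\bigr)+\pi_0\bigl(\tR_h^{-1}X\bigr),\]
so it suffices to prove that the first summand vanishes for every $Y\in\sV$.

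The key reduction is to conjugate the operator so that it lands in $\sM\sA$. Setting $c:=h^{-1}gh$, I would rewrite
\[\tR_h^{-1}(\tR_g-\tR_e)Y=(\tR_c-\tR_e)\tR_h^{-1}Y,\]
using $\tR_h^{-1}\tR_g\tR_h=\tR_c$. Since $g$ is loxodromic and $g_\tth=h\texp(\tJd_g)h^{-1}$, Proposition \ref{prop.dang} guarantees that $c\in\sM\sA$; this is exactly the argument already carried out in the proof of Lemma \ref{lem.id}. As $\tR_h$ is invertible, $Z:=\tR_h^{-1}Y$ ranges over all of $\sV$, so the claim reduces to showing $\pi_0\bigl((\tR_c-\tR_e)Z\bigr)=0$ for all $Z\in\sV$ with $c\in\sM\sA$.

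For the final step I would invoke Lemma \ref{lem.eigen}: for $c\in\sM\sA$ the operator $\tR_c$ restricts to the identity on $\sV^0$ and preserves each weight space, hence in particular $\tR_c\sV^{\neq0}=\sV^{\neq0}$. Decomposing $Z=Z_0+Z_{\neq0}$ along $\sV=\sV^0\oplus\sV^{\neq0}$, I get $(\tR_c-\tR_e)Z_0=0$ and $(\tR_c-\tR_e)Z_{\neq0}\in\sV^{\neq0}$, so that $(\tR_c-\tR_e)Z\in\sV^{\neq0}$ and $\pi_0$ annihilates it. Combining this with the reduction above yields the identity.

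The computation is short, and the only genuine subtlety is the reduction step, where one must correctly track the conjugation $c=h^{-1}gh$ and cite Proposition \ref{prop.dang} to place $c$ in $\sM\sA$; once that is done, everything is forced by the block structure of $\tR_c$ relative to the weight decomposition recorded in Lemma \ref{lem.eigen}. I do not anticipate any serious obstacle beyond this bookkeeping.
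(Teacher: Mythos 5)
Your argument is correct and follows essentially the same route as the paper: reduce to showing $\pi_0(\tR_h^{-1}(\tR_gZ-Z))=0$, conjugate to $c=h^{-1}gh\in\sM\sA$ via Proposition \ref{prop.dang}, and conclude from the block structure of $\tR_c$ given by Lemma \ref{lem.eigen}. The only (cosmetic) difference is that you observe directly that $(\tR_c-\tR_e)Z\in\sV^{\neq0}$, whereas the paper reaches the same conclusion by expanding $\pi_0(\tR_c(Y-\pi_0(Y)))$.
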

	\begin{proof}
		We recall from Proposition \ref{prop.dang} that $c:=h^{-1}gh\in\sM\sA$. We denote $\tR_h^{-1}Y$ by $Z$ and observe that 
		\[Z=\pi_0(Z)+(Z-\pi_0(Z)),\]
		with $\pi_0(Z)\in\sV^0$ and $(Z-\pi_0(Z))\in\sV^{\neq0}$. Hence, using Lemma \ref{lem.eigen} we obtain
		\[\tR_c Z=\pi_0(Z)+\tR_c(Z-\pi_0(Z)),\] 
		with $\tR_c(Z-\pi_0(Z))\in\sV^{\neq0}$. Hence, we deduce that
		\begin{align*}
			\pi_0(\tR_h^{-1}(\tAf_{(g,X)}Y-Y))&=\pi_0(\tR_h^{-1}(\tR_gY-Y))+\pi_0(\tR_h^{-1}X)\\
			&=\pi_0(\tR_cZ-Z)+\pi_0(\tR_h^{-1}X)=\pi_0(\tR_h^{-1}X).
		\end{align*}
	\end{proof}
	\begin{definition}\label{def.msi}
		Let $(g,X)\in\GV$ be such that $g$ is loxodromic, and let $g_\tth$ be its hyperbolic part with respect to the Jordan decomposition. Let $h\in\sG$ be such that $g_\tth=h\exp(\tJd_g)h^{-1}$. Then the \textit{Margulis--Smilga} invariant of $(g,X)$, denoted by $\tM(g,X)$, is defined as follows:
		\[\tM(g,X):=\pi_0(\tR_{h}^{-1}X).\]
	\end{definition}
	\begin{remark}
		Note that by Definition 6.2 of \cite{Smilga4}, Proposition 7.8 of \cite{Smilga4}, and Lemma \ref{lem.eigen}, the definition of a Margulis invariant given here is the same as the the definition of a Margulis invariant given in Definition 7.19 of \cite{Smilga4}. Abels--Margulis--Soifer \cite{AMS3} were the first to modify real valued Margulis invariants into vector valued invariants. Later, Smilga used similar invariants in \cite{Smilga,Smilga3,Smilga4} to construct proper affine actions of Schottky groups.
	\end{remark}
	\begin{proposition}\label{prop.marg}
		Let $g\in\sG$ be a loxodromic element, and let $h\in\sG$ be such that $g_\tth=h\exp(\tJd_g)h^{-1}$. Then, for any $Y\in\sV$, we have
		\[\chi^{red}_g(\tR_g)Y=\chi^{red}_g(1)\tR_h\tM(g,Y).\]
	\end{proposition}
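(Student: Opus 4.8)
The plan is to reduce the loxodromic element $g$ to its diagonalizable model $c\in\sM\sA$ via the conjugating element $h$, transport the polynomial identity of Proposition \ref{prop.parpro} along this conjugation, and then recognize the resulting vector as the Margulis-Smilga invariant.

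First I would invoke Proposition \ref{prop.dang}: since $g$ is loxodromic and $h$ satisfies $g_\tth=h\texp(\tJd_g)h^{-1}$, the element $c:=h^{-1}gh$ lies in $\sM\sA$. Then $g=hch^{-1}$ gives $\tR_g=\tR_h\tR_c\tR_h^{-1}$, whence $\tR_e-\tR_g=\tR_h(\tR_e-\tR_c)\tR_h^{-1}$ (using that $\tR_e=\tR_h\tR_e\tR_h^{-1}$). Raising to powers and taking linear combinations, this conjugation relation passes to any polynomial, so that $\tP(\tR_e-\tR_g)=\tR_h\,\tP(\tR_e-\tR_c)\,\tR_h^{-1}$ for every $\tP\in\R[x]$. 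Moreover, since characteristic polynomials are invariant under conjugation, $\tCP_g=\tCP_c$ and hence $\tP_g=\tP_c$; in particular $\tP_g(0)=\tP_c(0)$. This is exactly the transfer already used in the proof of Lemma \ref{lem.id}.

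Next I would apply the transported identity to the polynomial $\tP_g$, evaluated at an arbitrary $Y\in\sV$:
\[\tP_g(\tR_e-\tR_g)Y=\tR_h\,\tP_g(\tR_e-\tR_c)\,(\tR_h^{-1}Y)=\tR_h\,\tP_c(\tR_e-\tR_c)\,(\tR_h^{-1}Y).\]
Now Proposition \ref{prop.parpro}, applied to $c\in\sM\sA$ and the vector $\tR_h^{-1}Y$, yields $\tP_c(\tR_e-\tR_c)(\tR_h^{-1}Y)=\tP_c(0)\,\pi_0(\tR_h^{-1}Y)$. Substituting this, and using both $\tP_c(0)=\tP_g(0)$ and the definition $\tM(g,Y)=\pi_0(\tR_h^{-1}Y)$ from Definition \ref{def.msi}, gives
\[\tP_g(\tR_e-\tR_g)Y=\tP_g(0)\,\tR_h\,\pi_0(\tR_h^{-1}Y)=\tP_g(0)\,\tR_h\,\tM(g,Y),\]
as desired.

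The computation is essentially a bookkeeping exercise once the right reductions are in place; the only genuine content is the transfer step, namely that the conjugation relation $\tR_e-\tR_g=\tR_h(\tR_e-\tR_c)\tR_h^{-1}$ is compatible with polynomial evaluation and that $\tP_g=\tP_c$. Everything else — the explicit projection formula and the identification with $\tM(g,Y)$ — is supplied verbatim by Proposition \ref{prop.parpro} and Definition \ref{def.msi}. I therefore expect no real obstacle, only the need to state the conjugation/polynomial compatibility cleanly.
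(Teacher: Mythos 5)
Your proposal is correct and follows essentially the same route as the paper's own proof: reduce to $c=h^{-1}gh\in\sM\sA$ via Proposition \ref{prop.dang}, apply Proposition \ref{prop.parpro} to $\tR_h^{-1}Y$, use $\tP_g=\tP_c$, and identify $\pi_0(\tR_h^{-1}Y)$ with $\tM(g,Y)$ via Definition \ref{def.msi}. The only difference is that you spell out the conjugation--polynomial compatibility more explicitly than the paper does.
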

	\begin{proof}
		Let $c:=h^{-1}gh$. Then by Proposition \ref{prop.dang}, we have $c\in\sM\sA$. Now, using Proposition \ref{prop.parpro}, we obtain that 
		$\chi^{red}_c(\tR_c)X=\chi^{red}_c(1)\pi_0(X)$ for all $X\in\sV$. Also, we have $\tP_c(x)=\tP_g(x)$. Hence, we deduce that
		\[\chi^{red}_g(\tR_g)Y=\tR_h\chi^{red}_c(\tR_c)\tR_h^{-1}Y=\chi^{red}_c(1)\tR_h\pi_0(\tR_h^{-1}Y)=\chi^{red}_g(1)\tR_h\tM(g,Y),\]
		and our result follows.
	\end{proof}
	
	\begin{notation}
		Suppose $\tR$ is a self-contragredient representation. We note that by Lemma 1.3 of \cite{Gros1} the representation $\tR$ admits an invariant symmetric bilinear form. We denote this bilinear form by $\tB_\tR$ and $\tB_\tR(v,v)$ by $\tB_\tR(v)$.
	\end{notation}
	
	\begin{corollary}\label{cor.margrat}
		Suppose $\tR$ is a self-contragredient representation, $g$ is loxodromic and $(g,X)\in\GV$. Then
		\[\tB_\tR(\tM(g,X))=\chi^{red}_g(1)^{-2}\tB_\tR(\chi^{red}_g(\tR_g)X)\] 
		is a rational expression in $(g,X)$.
	\end{corollary}
	\begin{proof}
		Suppose $g_\tth$ is the hyperbolic part of $g$ with respect to the Jordan decomposition and $h\in\sG$ is such that $g_\tth=h\exp(\tJd_g)h^{-1}$. We use Proposition \ref{prop.marg} and observe that 
		\[\tB_\tR(\tM(g,X))=\tB_\tR(\tR_h\tM(g,X))=\chi^{red}_g(1)^{-2}\tB_\tR(\chi^{red}_g(\tR_g)X).\]
		As $\tB_\tR$ is a symmetric bilinear form and $\chi^{red}_g$ is a polynomial, our result follows. 
	\end{proof}
	
	\section{Isospectral rigidity}\label{sec.nir}
	In this section, we prove various isospectral rigidity results related to Margulis invariant spectra of non trivial affine actions. Throughout this section we assume that $(\sG,\sV,\tR)$ satisfy Hypothesis \ref{hyp.1} and $\rho, \varrho:\Gamma\to\GV$ satisfy Hypothesis \ref{hyp.2} \footnote{The requirement that the linear part of the image of every non-identity element is loxodromic might be stronger than required for the arguments to follow. Similar arguments might also go through in the setting of $\sP$-proximal elements for some appropriate $\sP$ as discussed in Section 5 of \cite{Smilga4}.}.
	
	\subsection{Zero spectrum}
	
	We denote $\tL(\rho(\gamma))$ by $\tL_\rho(\gamma)$, $\tT(\rho(\gamma))$ by $\tT_\rho(\gamma)$ and $\tM(\rho(\gamma))$ by $\tM_\rho(\gamma)$.
	
	\begin{definition}
		The map $\tM_\rho:\Gamma\setminus\{e\}\to\sV$ (respectively $\tM_\varrho$) is called the \textit{marked} Margulis invariant spectrum of the representation $\rho$ (respectively $\varrho$).
	\end{definition}
	
	\begin{proposition}\label{prop.zd}
		Either $\rho(\Gamma)$ is Zariski dense inside $\GV$ or $\rho(\Gamma)$ is conjugate to $\tL_\rho(\Gamma)$ under the action of some element of $\{e\}\ltimes\sV$.
	\end{proposition}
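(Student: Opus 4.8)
The plan is to study the Zariski closure $H:=\overline{\rho(\Gamma)}$ of $\rho(\Gamma)$ inside $\GV$ and to show that the only ambiguity in $H$ is governed by its intersection with the translation subgroup $\{e\}\ltimes\sV$. First I would record that $\tL(H)=\sG$: since $\tL:\GV\to\sG$ is a morphism of algebraic groups, its image $\tL(H)$ is a Zariski-closed subgroup, and it contains $\tL_\rho(\Gamma)$, which is Zariski dense in $\sG$ by Convention \ref{conv.0}; hence $\tL(H)=\sG$. Next I would set $N:=H\cap(\{e\}\ltimes\sV)$. This is a Zariski-closed subgroup of the vector group $\sV$, and since we work in characteristic zero every such subgroup is a linear subspace; write $N=\{e\}\ltimes W$ for a subspace $W\subseteq\sV$.

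The key structural observation is that $W$ is $\tR(\sG)$-invariant. Indeed $N$ is normal in $H$, and a direct computation of the conjugation in $\GV$ gives, for all $g\in\sG$ and $X,Y\in\sV$,
\[(g,X)(e,Y)(g,X)^{-1}=(e,\tR_gY).\]
Thus the conjugation action of $H$ on $N$ factors through $\tL(H)=\sG$ acting by $\tR$, so $W$ is invariant under $\tR_g$ for every $g\in\sG$. Since $\tR$ is irreducible, either $W=\sV$ or $W=\{0\}$. If $W=\sV$, then $H\supseteq\{e\}\ltimes\sV$ together with $\tL(H)=\sG$ forces $H=\GV$, i.e. $\rho(\Gamma)$ is Zariski dense inside $\GV$, which is the first alternative.

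It remains to treat the case $W=\{0\}$. Here $\tL|_H:H\to\sG$ has trivial kernel and image $\sG$, hence is an isomorphism of algebraic groups; its inverse is a regular section $g\mapsto(g,\phi(g))$ of $\tL$, and the fact that this section is a homomorphism is exactly the cocycle identity $\phi(gh)=\phi(g)+\tR_g\phi(h)$. I would then invoke the vanishing of $H^1(\sG,\sV)$ for the finite-dimensional representation $\sV$ of the semisimple group $\sG$ (Whitehead's lemma, equivalently linear reductivity in characteristic zero) to conclude that $\phi$ is a coboundary, that is $\phi(g)=\tR_gY-Y$ for some $Y\in\sV$. Consequently, for each $\gamma\in\Gamma$ we have $\rho(\gamma)=(\tL_\rho(\gamma),\tT_\rho(\gamma))$ with $\tT_\rho(\gamma)=\tR_{\tL_\rho(\gamma)}Y-Y$, and since $(e,-Y)(g,0)(e,-Y)^{-1}=(g,\tR_gY-Y)$, this yields
\[\rho(\gamma)=(e,-Y)(\tL_\rho(\gamma),0)(e,-Y)^{-1}.\]
Hence $\rho(\Gamma)$ is the conjugate of its linear part by the element $(e,-Y)\in\{e\}\ltimes\sV$, which is the second alternative.

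The main obstacle is the case $W=\{0\}$: one must know that a section of $\tL$ over $\sG$ must differ from the ``zero section'' $g\mapsto(g,0)$ by a translation, and this rests entirely on the cohomological vanishing $H^1(\sG,\sV)=0$ (geometrically, the existence of a common fixed point $-Y$ for the affine action of $H$ on $\sV$). By contrast, the remaining ingredients — closedness of the image $\tL(H)$ and the fact that the algebraic subgroup $N$ of the vector group is a linear subspace — are standard facts about algebraic groups in characteristic zero.
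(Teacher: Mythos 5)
Your proof is correct and follows essentially the same route as the paper: pass to the Zariski closure, show its image under $\tL$ is all of $\sG$, and split on whether the kernel of $\tL$ restricted to the closure is trivial (Whitehead's lemma gives the coboundary) or not (irreducibility forces the kernel to be all of $\{e\}\ltimes\sV$). The one cosmetic difference is in the nontrivial-kernel case: you use the standard fact that a Zariski-closed subgroup of a vector group in characteristic zero is an $\tR(\sG)$-invariant linear subspace, whereas the paper instead invokes its Appendix Lemma \ref{lem.irred} to show the additive group generated by a single nonzero orbit is all of $\sV$; both are valid.
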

	\begin{proof}
		Let $\sX$ be the Zariski closure of $\rho(\Gamma)$ inside $\GV$. We use Proposition 7.4.B.b of \cite{Hum} to deduce that $\tL(\sX)$ is Zariski closed inside $\sG$. We note that $\tL_\rho(\Gamma)\subset\tL(\sX)$ and $\tL_\rho(\Gamma)$ is Zariski dense in $\sG$. Hence, we deduce that $\tL(\sX)=\sG$.
		
		Now we consider the map $\left.\tL\right|_\sX: \sX\to\sG$ and note that \[\ker(\left.\tL\right|_\sX)=(\{e\}\ltimes\sV)\cap\sX.\]
		We will prove our result in two parts as follows:
		
		$\diamond$ If $\ker(\left.\tL\right|_\sX)$ is trivial, then $\left.\tL\right|_\sX$ is an isomorphism. Hence, for all $g\in\sG$ there exists $X_g\in\sV$ such that $X_{gh}=X_g+\tR_gX_h$ and $\sX=\{(g,X_g)\mid g\in\sG\}$. We use Whitehead's Lemma (see end of section 1.3.1 in page 13 of \cite{Raghu}) and deduce that there exists $X\in\sV$ such that $X_g=X-\tR_gX$. Therefore, we have $\tT_\rho(\gamma)=X-\tR_{\tL_\rho(\gamma)}X$ for all $\gamma\in\Gamma$. Hence $\rho(\gamma)=(e,X)(\tL_\rho(\gamma),0)(e,X)^{-1}$ for all $\gamma\in\Gamma$.
		
		$\diamond$ Let $\ker(\left.\tL\right|_\sX)$ be non trivial. We observe that $\ker(\left.\tL\right|_\sX)=\sX\cap\ker(\tL)$. As both $\sX$ and $\ker(\tL)=(\{e\}\ltimes\sV)$ are Zariski closed subgroups, we deduce that $\ker(\left.\tL\right|_\sX)$ is a Zariski closed subgroup. So it is a vector subspace of $\sV$. As $\tR$ is irreducible, we obtain that $\ker(\left.\tL\right|_\sX)=(\{e\}\ltimes\sV)$. Furthermore, as $\tL(\sX)=\sG$, we conclude that $\sX=\GV$.
		
		Therefore, either $\rho(\Gamma)$ is Zariski dense inside $\GV$ or $\rho(\Gamma)$ is conjugate to $\tL_\rho(\Gamma)$ under the action of some element of $\{e\}\ltimes\sV$ and our result follows.
	\end{proof}
	
	\begin{proposition}\label{prop.marg0}
		Suppose $\tM_\rho(\gamma)=0$ for all non identity $\gamma\in\Gamma$. Then $\rho(\Gamma)$ is conjugate to $\tL_\rho(\Gamma)$ under the action of some element of $\{e\}\ltimes\sV$. 
	\end{proposition}
	\begin{proof}
		As $\tM_\rho(\gamma)=0$ for all non identity $\gamma\in\Gamma$, using Proposition \ref{prop.marg}, we obtain that $\chi^{red}_{\tL_\rho(\gamma)}(\tR_{\tL_\rho(\gamma)})\tT_\rho(\gamma)=0$ for all non identity $\gamma\in\Gamma$. Also, by Remark \ref{rem.chired} we have $\chi^{red}_{\tL_\rho(e)}(\tR_{\tL_\rho(e)})\tT_\rho(e)=0$. We consider the following map:
		\begin{align*}
			f_0: \GV&\rightarrow\sV\\
			(g,X)&\mapsto \chi^{red}_g(\tR_g)X
		\end{align*}
		and observe that it is algebraic. We denote the zero set of $f_0$ by $\sZ(f_0)$ i.e.
		\[\sZ(f_0):=\{(g,X)\in\GV\mid f_0(g,X)=0\}.\]
		We choose $X\neq0$ inside $\sV^0$ and a loxodromic element $g\in\sG$ such that the dimension of the unit eigenspace of $\tR_g$ is exactly $\dim\sV^0$. Moreover, let $h\in\sG$ be such that $hgh^{-1}\in\sM\sA$. Then using Lemma \ref{lem.reg} and Proposition \ref{prop.marg} we obtain that
		\[f_0(g,\tR_hX)=\chi^{red}_g(\tR_g)\tR_hX=\chi^{red}_g(1)\tR_h\pi_0(X)=\chi^{red}_g(1)\tR_hX\neq0.\]
		Hence $\sZ(f_0)\subsetneq\GV$ and it follows that $\sX$, the Zariski closure of $\rho(\Gamma)$ inside $\GV$, is a proper subvariety of $\GV$ i.e. $\sX\subset\sZ(f_0)\subsetneq\GV$. Finally, we conclude our result by using Proposition \ref{prop.zd}.
	\end{proof}
	
	\begin{corollary}\label{cor.marlin}[refer to Theorem \ref{thm.marlin}]
		Let $\tL_\rho=\tL_\varrho$ and let $\tM_\rho(\gamma)=\tM_\varrho(\gamma)$ for all non identity $\gamma\in\Gamma$. Then there exists an inner automorphism $\sigma$ of $\GV$ such that $\sigma\circ\rho=\varrho$.
	\end{corollary}
	\begin{proof}
		Let $\eta:=(\tL_\rho,\tT_\rho-\tT_\varrho)$. We observe that for all non identity $\gamma\in\Gamma$, we have
		\[\tM_\eta(\gamma)=\tM_\rho(\gamma)-\tM_\varrho(\gamma)=0.\]
		Therefore, using Proposition \ref{prop.marg0} we obtain that there exists $Y\in\sV$ such that $\eta(\gamma)=(e,Y)(\tL_\rho(\gamma),0)(e,Y)^{-1}$
		for all $\gamma\in\Gamma$. Hence, for all $\gamma\in\Gamma$ it follows that $\tT_\rho(\gamma)-\tT_\varrho(\gamma)=Y-\tR_{\tL_\rho(\gamma)}Y$ and we conclude by observing that for all $\gamma\in\Gamma$, the following holds: $\rho(\gamma)=(e,Y)\varrho(\gamma)(e,Y)^{-1}$.
	\end{proof}
	
	\begin{corollary}\label{cor.JM}[refer to Theorem \ref{thm.isoJM}]
		Suppose $\mathsf{rank}(\sG)\geq2$ and $(\tJd,\tM)(\rho(\gamma))=(\tJd,\tM)(\varrho(\gamma))$ for all non identity $\gamma\in\Gamma$. Then there exists an inner automorphism $\sigma$ of $\GV$ such that $\sigma\circ\rho=\varrho$.
	\end{corollary}
	\begin{proof}
		As $\tJd(\rho(\gamma))=\tJd(\varrho(\gamma))$ for all $\gamma\in\Gamma$, using Theorem B of Dal'bo--Kim \cite{DK} (see also the Criterion in \cite{DK2}) we obtain that $\tL_\rho=g\tL_\varrho g^{-1}$ for some $g\in\sG$. We observe from the definition that Margulis invariants are invariant under conjugation. It follows that \[\tM(g\tL_\varrho(\gamma)g^{-1},\tR_g\tT_\varrho(\gamma))=\tM(\tL_\varrho(\gamma),\tT_\varrho(\gamma)).\]
		Hence, we deduce that
		\[\tM(\tL_\rho(\gamma),\tR_g\tT_\varrho(\gamma))=\tM(\tL_\rho(\gamma),\tT_\rho(\gamma)).\]
		Therefore, $\tM(\tL_\rho(\gamma),\tT_\rho(\gamma)-\tR_g\tT_\varrho(\gamma))=0$ for all non identity $\gamma\in\Gamma$. Finally, we observe that $(\tL_\rho,\tT_\rho-\tR_g\tT_\varrho):\Gamma\to\GV$ is a homomorphism and use Corollary \ref{cor.marlin} to conclude that there exists an inner automorphism $\sigma$ of $\GV$ such that $\sigma\circ\rho=\varrho$.
	\end{proof}
	
	\begin{proposition}\label{prop.margnorm0}
		Suppose $\tR$ is a self-contragredient representation and $\tB_\tR(\tM_\rho(\gamma))=0$ for all non identity $\gamma\in\Gamma$. Then $\rho(\Gamma)$ is conjugate to $\tL_\rho(\Gamma)$ under the action of some element of $\{e\}\ltimes\sV$. 
	\end{proposition}
	\begin{proof}
		As $\tB_\tR(\tM_\rho(\gamma))=0$ for all non identity $\gamma\in\Gamma$, using Proposition \ref{prop.marg}, we obtain that $\tB_\tR(\chi^{red}_{\tL_\rho(\gamma)}(\tR_{\tL_\rho(\gamma)})\tT_\rho(\gamma))=0$ for all non identity $\gamma\in\Gamma$. Also, by Remark \ref{rem.chired}, we have $\tB_\tR(\chi^{red}_{\tL_\rho(e)}(\tR_{\tL_\rho(e)})\tT_\rho(e))=0$. We consider the following map:
		\begin{align*}
			\tB(f_0): \GV&\rightarrow\R\\
			(g,X)&\mapsto \tB_\tR(\chi^{red}_g(\tR_g)X)
		\end{align*}
		and observe that it is algebraic. We denote the zero set of $\tB(f_0)$ by $\sZ(\tB(f_0))$ i.e.
		\[\sZ(\tB(f_0)):=\{(g,X)\in\GV\mid\tB(f_0)(g,X)=0\}.\]
		We use Lemma \ref{lem.ue} and Lemma \ref{lem.reg} to obtain that there exists $c\in\sM\sA$ such that $\chi^{red}_c(1)\neq0$. As $\tB_\tR$, the invariant form of $\tR$, is a symmetric bilinear form, we use Lemma 1.1 of \cite{Gros1} to conclude that the restriction of $\tB_\tR$ on $\sV^0$ is a non-degenerate symmetric bilinear form. Hence, $\sV^0$ admits vectors which are not self-orthogonal. Let $X\in\sV^0$ be such that $\tB_\tR(X)\neq0$. We use Proposition \ref{prop.parpro} and obtain
		\[\tB(f_0)(c,X)=\tB_\tR(\chi^{red}_c(\tR_c)X)=\chi^{red}_c(1)^2\tB_\tR(\pi_0(X))=\chi^{red}_c(1)^2\tB_\tR(X)\neq0.\]
		
		Hence $\sZ(\tB(f_0))\subsetneq\GV$ and it follows that $\sX$, the Zariski closure of $\rho(\Gamma)$ inside $\GV$, is a proper subvariety of $\GV$ i.e. $\sX\subset\sZ(\tB(f_0))\subsetneq\GV$. Finally, we conclude our result by using Proposition \ref{prop.zd}.
	\end{proof}
	
	\begin{corollary}\label{cor.tfae}
		The following are equivalent:
		\begin{enumerate}
			\item $\tM_\rho(\gamma)=0$ for all non identity $\gamma\in\Gamma$,
			\item $\rho(\Gamma)$ is conjugate to $\tL_\rho(\Gamma)$ under the action of some element of $\{e\}\ltimes\sV$.
		\end{enumerate}
		Moreover, if $\tR$ is a self-contragredient representation. Then both are equivalent to:
		\begin{enumerate}
			\item[3.] $\tB_\tR(\tM_\rho(\gamma))=0$ for all non identity $\gamma\in\Gamma$.
		\end{enumerate}
	\end{corollary}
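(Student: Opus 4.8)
The plan is to recognise that this corollary is almost entirely a repackaging of the theorems already proved, with only one genuinely new (and short) implication to supply. For the first equivalence, the direction $(1)\Rightarrow(2)$ is exactly the content of Theorem \ref{thm.marg0}, so the only thing left there is the converse $(2)\Rightarrow(1)$. For the second equivalence, $(4)\Rightarrow(3)$ is immediate because the $\|\cdot\|_\tR$-norm of the zero vector vanishes, and I would obtain $(3)\Rightarrow(4)$ by chaining two earlier facts: Theorem \ref{thm.margnorm0} converts hypothesis $(3)$ into statement $(2)$ (conjugacy of $\rho(\Gamma)$ to $\tL_\rho(\Gamma)$ by an element of $\{e\}\ltimes\sV$), after which the converse $(2)\Rightarrow(1)$ yields $(4)$, noting that statements $(1)$ and $(4)$ are literally the same. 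Thus the whole corollary reduces to establishing $(2)\Rightarrow(1)$.

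To prove $(2)\Rightarrow(1)$, first I would unwind the hypothesis: there is a $Y\in\sV$ with $\rho(\gamma)=(e,Y)(\tL_\rho(\gamma),0)(e,Y)^{-1}$ for every $\gamma\in\Gamma$. Writing $g:=\tL_\rho(\gamma)$ and multiplying out in $\GV$ (using $(e,Y)^{-1}=(e,-Y)$) gives the translation part $\tT_\rho(\gamma)=Y-\tR_gY=(\tR_e-\tR_g)Y$. Next I would feed this into Definition \ref{def.msi}. Choosing $h\in\sG$ with $g_\tth=h\texp(\tJd_g)h^{-1}$ and setting $c:=h^{-1}gh$, Proposition \ref{prop.dang} gives $c\in\sM\sA$, and moving $\tR_h^{-1}$ across the product yields
\[\tR_h^{-1}\tT_\rho(\gamma)=(\tR_e-\tR_c)\tR_h^{-1}Y.\]
Writing $W:=\tR_h^{-1}Y=W_0+W_{\neq0}$ with $W_0\in\sV^0$ and $W_{\neq0}\in\sV^{\neq0}$, Lemma \ref{lem.eigen} gives $\tR_cW_0=W_0$ and $\tR_cW_{\neq0}\in\sV^{\neq0}$, so $\pi_0(\tR_cW)=W_0=\pi_0(W)$ and hence $\pi_0\bigl((\tR_e-\tR_c)W\bigr)=0$. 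Therefore $\tM_\rho(\gamma)=\pi_0(\tR_h^{-1}\tT_\rho(\gamma))=0$, which is $(1)$. (Alternatively, since $\tT_\rho(\gamma)=(\tR_e-\tR_g)Y$, one could combine Proposition \ref{prop.marg} with Lemma \ref{lem.id} to get $\tP_g(\tR_e-\tR_g)\tT_\rho(\gamma)=0$; but the projection computation above is cleaner as it sidesteps the possibility $\tP_g(0)=0$.)

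The only delicate point—and the one I would flag as the conceptual obstacle—is the implication $(3)\Rightarrow(4)$. Because the invariant form $\tB_\tR$ need not be positive definite, the vanishing of $\|\tM_\rho(\gamma)\|_\tR$ does \emph{not} by itself force $\tM_\rho(\gamma)=0$ pointwise: a nonzero self-orthogonal value would defeat any naive argument. The resolution I would stress is that one must not argue value-by-value but instead route through the global algebraic input of Theorem \ref{thm.margnorm0}, whose proof shows the zero set of the algebraic map $\beth$ is a \emph{proper} subvariety (witnessed by a loxodromic $c\in\sM\sA$ with $\tP_c(0)\neq0$ and a non-self-orthogonal vector in $\sV^0$), thereby confining the Zariski closure of $\rho(\Gamma)$ and forcing the conjugacy statement $(2)$. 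Once $(2)$ is in hand, the clean computation of the previous paragraph upgrades it to the strictly stronger pointwise vanishing $(4)$, completing all four implications.
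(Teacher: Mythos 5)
Your proposal is correct and follows essentially the same route as the paper: $(1)\Rightarrow(2)$ via Theorem \ref{thm.marg0}, the direct computation $\tM_\rho(\gamma)=\pi_0\bigl((\tR_e-\tR_c)\tR_h^{-1}Y\bigr)=0$ using Lemma \ref{lem.eigen} for $(2)\Rightarrow(1)$, and the chain $(4)\Rightarrow(3)\Rightarrow(2)\Rightarrow(1)=(4)$ via Theorem \ref{thm.margnorm0} for the second equivalence. Your remark that $(3)\Rightarrow(4)$ cannot be argued value-by-value because $\tB_\tR$ may be indefinite matches the paper's own caveat at the start of that section.
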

	\begin{proof}
		$(1)\implies(2)$ is Proposition \ref{prop.marg0}, $(3)\implies(2)$ is Proposition \ref{prop.margnorm0} and $(2)\implies(1)\implies(3)$ is straightforward.
	\end{proof}
	
	\begin{corollary}\label{cor.zd}
		Suppose there exists a $\gamma\in\Gamma$ such that $\tM_\rho(\gamma)\neq0$. Then, $\rho(\Gamma)$ is Zariski dense inside $\GV$. 
	\end{corollary}
	\begin{proof}
		We use Proposition \ref{prop.zd} to obtain that either $\rho(\Gamma)$ is Zariski dense inside $\GV$ or $\rho(\Gamma)$ is conjugate to $\tL_\rho(\Gamma)$ under the action of some element of $\{e\}\ltimes\sV$. We observe that if $\rho(\Gamma)$ is not Zariski dense inside $\GV$, then $\rho(\Gamma)$ is conjugate to $\tL_\rho(\Gamma)$ under the action of some element of $\{e\}\ltimes\sV$ and we obtain a contradiction using Corollary \ref{cor.tfae}.
	\end{proof}
	
	\subsection{General case}
	In this subsection we prove isopectrality results under the assumption that $\tR$ admit invariant norms.
	
	\begin{lemma}\label{lem.alg}
		Suppose $\tR$ is a self-contragredient representation. Also, suppose $(g,X),(h,Y)\in\GV$ are such that their linear parts are loxodromic and $\tB_\tR(\tM(g,X))=\tB_\tR(\tM(h,Y))$. Then
		\[\tB_\tR(\chi^{red}_g(1)\chi^{red}_h(\tR_h)Y)=\tB_\tR(\chi^{red}_h(1)\chi^{red}_g(\tR_g)X).\]
	\end{lemma}
	\begin{proof}
		We use Proposition \ref{prop.marg} and observe that
		\begin{align*}
			\tB_\tR(\chi^{red}_g(1)\chi^{red}_h(\tR_h)Y)&=\tB_\tR(\chi^{red}_g(1)\chi^{red}_h(1)\tM(h,Y))\\
			&=\tB_\tR(\chi^{red}_g(1)\chi^{red}_h(1)\tM(g,X))=\tB_\tR(\chi^{red}_h(1)\chi^{red}_g(\tR_g)X).
		\end{align*}
		Our result follows.
	\end{proof}
	
	\begin{notation}\label{not.iota}
		Let $\sN_r,\sN_l$ be two nontrivial proper normal subgroups of $\GV$ for which there exists a continuous isomorphism $\iota:(\GV)/\sN_r\to(\GV)/\sN_l$. We denote the set of all $(g_\iota,X_{g_\iota}, g_\iota^\prime, Y_{g_\iota})\in\left(\GV\times\GV\right)$ such that $(g_\iota^\prime,Y_{g_\iota})\sN_l=\iota((g_\iota, X_{g_\iota})\sN_r)$ by $\sD_\iota$ i.e.
		\[\sD_\iota:=\{(g_\iota,X_{g_\iota}, g_\iota^\prime, Y_{g_\iota})\mid(g_\iota^\prime,Y_{g_\iota})\sN_l=\iota((g_\iota, X_{g_\iota})\sN_r) \}.\]
	\end{notation}
	
	\begin{lemma}\label{lem.neq}
		Suppose $\tR$ is a self-contragredient representation and ${f}:\GV\times\GV\to\R$ is such that for all $(g,X,h,Y)\in\GV\times\GV$, we have:
		\[{f}(g,X,h,Y):=\tB_\tR(\chi^{red}_g(1)\chi^{red}_h(\tR_h)Y)-\tB_\tR(\chi^{red}_h(1)\chi^{red}_g(\tR_g)X),\]
		and let $\sZ({f}):=\{(g,X,h,Y)\in\GV\times\GV\mid {f}(g,X,h,Y)=0\}$.
		Then $\sD_\iota\not\subset\sZ({f})$ for all $\iota$ as mentioned in Notation \ref{not.iota}. In particular, we have $\sZ({f})\subsetneq\GV\times\GV$. 
	\end{lemma}
	\begin{proof}
		Let $\sN_r,\sN_l$ be any two nontrivial proper normal subgroups of $\GV$ such that $\iota:(\GV)/\sN_r\to(\GV)/\sN_l$ is a continuous isomorphism. We use Proposition \ref{prop.normal} and observe that $\sN_r=\sG_r\ltimes\sV$ and $\sN_l=\sG_l\ltimes\sV$, for some proper normal subgroup $\sG_r,\sG_l$ of $\sG$. Now using the third isomorphism Theorem of groups we obtain that $(\GV)/\sN_r$ is isomorphic to $\sG/\sG_r$ and $(\GV)/\sN_l$ is isomorphic to $\sG/\sG_l$. Therefore, $\iota:(\GV)/\sN_r\to(\GV)/\sN_l$ gives rise to an isomorphism $\iota:\sG/\sG_r\to\sG/\sG_l$. Now we use Lemmas \ref{lem.ue} and \ref{lem.reg} to observe that the set $S:=\{g\in\sG\mid\tP_g(0)\neq0\}$ is an open dense subset of $\sG$. Moreover, as $\sG/\sG_r$ and $\sG/\sG_l$ are the quotients of $\sG$ by some group action, the projection maps $\pi_r:\sG\to\sG/\sG_r$ and $\pi_l:\sG\to\sG/\sG_l$ are open. Hence $\pi_r(S)$ and $\pi_l(S)$ are open dense subsets of $\sG/\sG_r$ and $\sG/\sG_l$ respectively. It follows that $\iota\circ\pi_r(S)$ is an open dense subset of $\sG/\sG_l$ and hence $\iota\circ\pi_r(S)\cap\pi_l(S)$ is an open dense subset of $\sG/\sG_l$. Let $p\in\iota\circ\pi_r(S)\cap\pi_l(S)$. Then, there exist $g_\iota,g_\iota^\prime\in S$ such that $p=\pi_l(g_\iota^\prime)=\iota\circ\pi_r(g_\iota)$ i.e. $g_\iota^\prime\sG_l=\iota(g_\iota\sG_r)$. It follows that $\tP_{g_\iota}(0)\neq0$, $\tP_{g_\iota^\prime}(0)\neq0$ and $(g_\iota^\prime,Y)\sN_l=\iota((g_\iota,X)\sN_r)$ for all $X,Y\in\sV$.
		
		As $\tB_\tR$, the invariant form of $\tR$, is a symmetric bilinear form, we use Lemma 1.1 of \cite{Gros1} to conclude that the restriction of $\tB_\tR$ on $\sV^0$ is a non-degenerate symmetric bilinear form. Hence, $\sV^0$ admits vectors which are not self-orthogonal. Let $V\in\sV^0$ be such that $\tB_\tR(V)\neq0$. Moreover, let $h\in\sG$ be such that $hg_\iota h^{-1}\in\sM\sA$. We choose $Y_{g_\iota}=0$, $X_{g_\iota}=\tR_hV$ and using Proposition \ref{prop.parpro} we observe that
		\begin{align*}
			{f}(g_\iota,X_{g_\iota},g_\iota^\prime,Y_{g_\iota})&={f}(g_\iota,X_{g_\iota},g_\iota^\prime,0)\\
			&=\tB_\tR(\chi^{red}_{g_\iota}(1)\chi^{red}_{g_\iota^\prime}(\tR_{g_\iota^\prime})0)-\tB_\tR(\chi^{red}_{g_\iota^\prime}(1)\chi^{red}_{g_\iota}(\tR_{g_\iota})X_{g_\iota})\\
			&=-\tB_\tR(\chi^{red}_{g_\iota^\prime}(1)\chi^{red}_{g_\iota}(1)\tR_h\circ\pi_0\circ\tR_h^{-1}(X_{g_\iota}))\\
			&=-\tB_\tR(\chi^{red}_{g_\iota^\prime}(1)\chi^{red}_{g_\iota}(1)V)\neq0.
		\end{align*}
		Hence, the set $\left(\GV\times\GV\right)\setminus\sZ({f})$ is non empty and in particular it contains $(g_\iota,X_{g_\iota},g_\iota^\prime, Y_{g_\iota})$ with $(g_\iota^\prime,Y_{g_\iota})\sN_l=\iota((g_\iota,X_{g_\iota})\sN_r)$, concluding our result.
	\end{proof}
	
	\begin{proposition}\label{prop.affauto}
		Suppose $\rho(\Gamma)$ and $\varrho(\Gamma)$ are both Zariski dense inside $\GV$. Suppose $\sX$, the Zariski closure of $(\varrho,\rho)(\Gamma)$ inside $\GV\times\GV$, is such that $\sD_\iota\not\subset\sX$ for all $\iota$ mentioned in Notation \ref{not.iota}. Then there exists a continuous automorphism $\sigma:\GV\to\GV$ such that $\sigma\circ\rho=\varrho$.
	\end{proposition}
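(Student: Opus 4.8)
The plan is to run a Goursat-type graph argument on the Zariski-closed subgroup $\sX\subset\GV\times\GV$ and then eliminate every degenerate possibility using the standing hypothesis on $\sD_\iota$. First I would record that $\sX$ is a Zariski-closed \emph{subgroup} of $\GV\times\GV$, being the Zariski closure of the subgroup $(\rho,\varrho)(\Gamma)$, and that both restricted projections $\left.\pi_l\right|_\sX$ and $\left.\pi_r\right|_\sX$ (from Remark \ref{rem.proj}) are surjective onto $\GV$: each image is a closed subgroup (image of an algebraic group under a homomorphism) containing the Zariski-dense set $\rho(\Gamma)$, respectively $\varrho(\Gamma)$. Setting
\[\sN_r:=\pi_l\left(\ker\left.\pi_r\right|_\sX\right),\qquad \sN_l:=\pi_r\left(\ker\left.\pi_l\right|_\sX\right),\]
one checks exactly as in the proof of Proposition \ref{prop.linauto} that $\sN_r$ and $\sN_l$ are closed normal subgroups of $\GV$ and that $\sX$ is precisely the graph $\sD_\iota$ of a continuous isomorphism $\iota\colon(\GV)/\sN_r\to(\GV)/\sN_l$ in the sense of Remark \ref{rem.iota}; indeed $(a,e)\in\sX$ forces $a\in\sN_r$, and symmetrically on the right.

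Next I would classify $\sN_r$ and $\sN_l$. By Proposition \ref{prop.normal} (together with irreducibility of $\tR$), every closed normal subgroup of $\GV$ is either trivial, equal to $\GV$, or of the form $\sG'\ltimes\sV$ for a closed normal subgroup $\sG'$ of $\sG$; in particular each of $\sN_r,\sN_l$ is of one of these three types. Since $\iota$ is an isomorphism of algebraic groups we have $\dim\sN_r=\dim\sN_l$, and among the three types only the trivial subgroup has dimension $0$ (recall $\dim\sV>1$). Hence $\sN_r$ is trivial if and only if $\sN_l$ is trivial, and in that case both $\left.\pi_l\right|_\sX$ and $\left.\pi_r\right|_\sX$ are bijective morphisms of algebraic groups, hence isomorphisms. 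Then $\sigma:=\left.\pi_r\right|_\sX\circ\left(\left.\pi_l\right|_\sX\right)^{-1}$ is a continuous automorphism of $\GV$, and since $\left(\left.\pi_l\right|_\sX\right)^{-1}\circ\rho=(\rho,\varrho)$ we obtain $\sigma\circ\rho=\left.\pi_r\right|_\sX\circ(\rho,\varrho)=\varrho$, as required.

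It therefore suffices to exclude the two remaining cases using $\sD_\iota\setminus\sX\neq\emptyset$. If $\sN_r$ and $\sN_l$ are both nontrivial and proper, then $\iota$ is one of the isomorphisms considered in Remark \ref{rem.iota}, while $\sX=\sD_\iota$ by the graph description; this contradicts $\sD_\iota\setminus\sX\neq\emptyset$. If instead $\sN_r=\GV$, then $(\GV)/\sN_r$ is trivial, hence so is $(\GV)/\sN_l$, forcing $\sN_l=\GV$ and $\sX=\GV\times\GV$; but then taking $\sN_r=\sN_l=\{e\}\ltimes\sV$ together with the isomorphism induced by $\mathrm{id}_\sG$ on $(\GV)/(\{e\}\ltimes\sV)\cong\sG$ yields a valid $\iota$ whose graph $\sD_\iota=\{(g,X,g,Y)\}$ is a proper subvariety of $\GV\times\GV=\sX$, so $\sD_\iota\setminus\sX=\emptyset$, again contradicting the hypothesis. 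Thus only the trivial case survives, and the construction of $\sigma$ above completes the argument.

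The main obstacle I anticipate is not the generic Goursat step but the careful treatment of the degenerate extreme $\sX=\GV\times\GV$: ruling it out requires feeding the \emph{identity} automorphism of $\sG$ back into the hypothesis through the quotient $(\GV)/(\{e\}\ltimes\sV)\cong\sG$, which is the one place the hypothesis is used in full force. One must also be sure, via irreducibility of $\tR$ (Lemma \ref{lem.irred}) and the triviality of the center of $\GV$, that the classification of normal subgroups in Proposition \ref{prop.normal} is exhaustive, so that no stray finite or graph-type closed normal subgroup can spoil the dimension comparison linking $\sN_r$ and $\sN_l$.
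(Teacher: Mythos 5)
Your proof is correct and follows essentially the same route as the paper's: surjectivity of both restricted projections from Zariski density, Goursat's lemma combined with the classification of normal subgroups of $\GV$ (Proposition \ref{prop.normal}), a dimension count linking the two kernels, and the hypothesis $\sD_\iota\setminus\sX\neq\emptyset$ to eliminate the two degenerate cases before reading off $\sigma$ from the graph. The only place you are more explicit than the paper is in ruling out $\sX=\GV\times\GV$ by feeding the identity automorphism of $(\GV)/(\{e\}\ltimes\sV)\cong\sG$ into the hypothesis, where the paper simply asserts $\sX\subsetneq\GV\times\GV$.
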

	\begin{proof}
		As $(\varrho,\rho)(\gamma)(\varrho,\rho)(\Gamma)(\varrho,\rho)(\gamma)^{-1}=(\varrho,\rho)(\Gamma)$ for all $\gamma\in\Gamma$, we obtain that $(\varrho,\rho)(\gamma)\sX(\varrho,\rho)(\gamma)^{-1}=\sX$. We denote the projections onto the left and right components of $\GV\times\GV$ by $\pi_l$ and $\pi_r$ respectively, i.e. $\pi_l,\pi_r: \GV\times\GV\to\GV$ be such that for all $(g,X,h,Y)\in\GV\times\GV$ we have $\pi_l(g,X,h,Y)=(g,X)$ and $\pi_r(g,X,h,Y)=(h,Y)$.
		We observe that $\pi_l$ and $\pi_r$ are homomorphisms. We use Proposition 7.4.B.b of \cite{Hum} to deduce that both $\pi_l(\sX)$ and $\pi_r(\sX)$ are Zariski closed subgroups of $\GV$. We observe that $\pi_l(\sX)\supset\varrho(\Gamma)$ and $\pi_r(\sX)\supset\rho(\Gamma)$. As both $\rho(\Gamma)$ and $\varrho(\Gamma)$ are Zariski dense inside $\GV$, we conclude that $\pi_l(\sX)=\GV=\pi_r(\sX)$.
		
		Now we consider the following two normal subgroups of $\sX$: $\sN_l:=\ker(\left.\pi_l\right|_\sX)$ and $\sN_r:=\ker(\left.\pi_r\right|_\sX)$. As $(\varrho,\rho)(\Gamma)\subset\sX$ and $\sN_l$ is normal in $\sX$, for all $\gamma\in\Gamma$ we have $(\varrho(\gamma),\rho(\gamma))\sN_l(\varrho(\gamma),\rho(\gamma))^{-1}\subset \sN_l$. Moreover, as $\sN_l=\ker(\pi_l)\cap\sX$, we obtain that $\sN_l\subset\{o\}\times\GV$, where $o:=(e,0)$. Hence any element of $\sN_l$ is of the form $(o,n)$ and we obtain that
		\[(\varrho(\gamma),\rho(\gamma))(o,n)(\varrho(\gamma),\rho(\gamma))^{-1}=(o,\rho(\gamma))(o,n)(o,\rho(\gamma))^{-1}\]
		for all $\gamma\in\Gamma$. As $\rho(\Gamma)$ is Zariski dense inside $\GV$, we obtain that $\sN_l$ is normal inside $\{o\}\times\GV$. Similarly, we obtain that $\sN_r$ is normal inside $\GV\times\{o\}$. Moreover, as $\pi_l(\sX)=\GV=\pi_r(\sX)$, we obtain that
		\[\dim(\sN_l)=\dim(\sX)-\dim(\GV)=\dim(\sN_r).\]
		Now using Proposition \ref{prop.normal} we deduce that one of the following holds:
		\begin{enumerate}
			\item $\sN_l=\{o\}\times\GV$ and $\sN_r=\GV\times\{o\}$,
			\item $\sN_l=\{o\}\times\sG_l\ltimes\sV$ and $\sN_r=\sG_r\ltimes\sV\times\{o\}$ for some nontrivial proper normal subgroups $\sG_l,\sG_r$ of $\sG$,
			\item both are trivial.
		\end{enumerate}
		We consider these three cases separately below:
		
		$\diamond$ If $\sN_l=\{o\}\times\GV$ and $\sN_r=\GV\times\{o\}$, then we obtain a contradiction. Indeed, we have $\GV\times\GV=\sN_r\sN_l\subset\sX\subsetneq\GV\times\GV$.
		
		$\diamond$ Suppose $\sN_l=\{o\}\times\sG_l\ltimes\sV$ and $\sN_r=\sG_r\ltimes\sV\times\{o\}$. Then, by Goursat's lemma \cite{Gour}, we get that the image of $\sX$ inside $(\GV)/\sN_r\times(\GV)/\sN_l$ is given by the graph of an isomorphism $\sigma:(\GV)/\sN_r\to(\GV)/\sN_l$. 
		
		Now we want to show that $\sigma$ is continuous. We consider the projections
		\begin{align*}
			p_r&:\GV\times\{o\}\to(\GV\times\{o\})/(\sG_r\ltimes\sV\times\{o\}),\\
			p_l&:\{o\}\times\GV\to(\{o\}\times\GV)/(\{o\}\times\sG_l\ltimes\sV).
		\end{align*}
		Let $\pi_l^\prime: \sX/(\sN_r\sN_l)\to(\GV)/\sN_r$ and $\pi_r^\prime: \sX/(\sN_r\sN_l)\to(\GV)/\sN_l$ respectively be the quotient maps induced by $p_r\circ(\left.\pi_l\right|_\sX)$ and $p_l\circ(\left.\pi_r\right|_\sX)$. We note that $\sigma=(\pi_l^\prime)^{-1}\circ\pi_r^\prime$. Hence $\sigma$ is a continuous isomorphism. It follows that, for all $g,g^\prime\in\sG$, and $X,X^\prime\in\sV$ with $\sigma((g,X)\sN_r)=(g^\prime,X^\prime)\sN_l$ we have $(g,X,g^\prime,X^\prime)\in\sX$ i.e. $\sD_\sigma\subset\sX$. Hence, $\emptyset=\sD_\sigma\setminus\sX\neq\emptyset$, a contradiction.
		
		$\diamond$ Suppose both $\sN_l$ and $\sN_r$ are trivial. Then, by Goursat's lemma \cite{Gour}, we get that $\sX$ inside $\GV\times\GV$ is the graph of an automorphism $\sigma$ of $\GV$. We can choose $\sigma$ to be $(\left.\pi_l\right|_\sX)^{-1}\circ(\left.\pi_r\right|_\sX)$ and conclude by observing that it is continuous.
	\end{proof}
	
	\begin{proposition}\label{prop.contaut}
		Let $\sigma:\GV\to\GV$ be a continuous automorphism. Then there exists $(\tA,Y)\in\mathsf{GL}(\sV)\ltimes\sV$ such that $\tA$ normalizes $\tR(\sG)$ and the action of $\sigma$ is conjugation by $(\tA,Y)$.
	\end{proposition}
	\begin{proof}
		We observe that $\sigma$ induces a continuous additive map $\tilde{\sigma}:\sV\to\sV$. As continuous additive maps between vector spaces are linear and $\sigma$ is an automorphism, $\tilde{\sigma}$ is an invertible linear map. Hence, there exists $\tA\in\mathsf{GL}(\sV)$ such that $\sigma(e,X)=(e,\tA X)$ for all $X\in\sV$. Moreover, for $g\in\sG$, let $g_{\sigma}\in\sG$ and $Y_{g_\sigma}\in\sV$ be such that $\sigma(g,0)=(g_\sigma,Y_{g_\sigma})$. Then $Y_{g_\sigma h_\sigma}=Y_{g_\sigma}+\tR_{g_\sigma}Y_{h_\sigma}$ for all $g_{\sigma},h_{\sigma}\in\sG$. We use Whitehead's Lemma (see end of section 1.3.1 in page 13 of \cite{Raghu}) to deduce that there exists $Y\in\sV$ such that $Y_{g_\sigma}=Y-\tR_{g_\sigma}Y$. We also note that for all $g\in\sG$ we have $\tA\tR_g=\tR_{g_\sigma}\tA$. Indeed, for any $X\in\sV$:
		\[(g_\sigma,Y_{g_\sigma}+\tA X)=\sigma(e,X)\sigma(g,0)=\sigma(g,0)\sigma(e,\tR_g^{-1}X)=(g_\sigma,Y_{g_\sigma}+\tR_{g_\sigma}\tA\tR_g^{-1}X),\]
		and it follows that $\sigma(g,X)=(\tA,Y)(\tR_g,X)(\tA,Y)^{-1}$.
	\end{proof}
	
	\begin{theorem}\label{thm.normain}(refer to Theorem \ref{thm.normainintro})
		Suppose $\tR$ is a self-contragredient representation and $\tB_\tR(\tM_\rho(\gamma))=\tB_\tR(\tM_\varrho(\gamma))$ for all non identity $\gamma\in\Gamma$. Then either of the following holds:
		\begin{enumerate}
			\item there exist $X,Y\in\sV$ such that $\rho$ is conjugate to $\tL_\rho$ by $(e,X)$ and $\varrho$ is conjugate to $\tL_\varrho$ by $(e,Y)$,
			\item there exists $(\tA,Y)\in\mathsf{GL}(\sV)\ltimes\sV$ such that $\tA$ normalizes $\tR(\sG)$ and $\rho$ is conjugate to $\varrho$ by $(\tA,Y)$.
		\end{enumerate}
		Moreover, if $\tL_\rho=\tL_\varrho$, then $\tA$ centralizes $\tR(\sG)$.
	\end{theorem}
	\begin{proof}
		We will prove this result in three parts.
		
		$\diamond$ Suppose $\tB_\tR(\tM_\rho(\gamma))=0$ for all non identity $\gamma\in\Gamma$. It follows that $\tB_\tR(\tM_\varrho(\gamma))=0$ for all non identity $\gamma\in\Gamma$. We use Corollary \ref{cor.tfae} and obtain that there exists $X,Y\in\sV$ such that $\rho=(e,X)\tL_\rho(e,X)^{-1}$ and $\varrho=(e,Y)\tL_\varrho(e,Y)^{-1}$. It follows that both $\rho(\Gamma)$ and $\varrho(\Gamma)$ are Zariski dense inside some conjugates of $\sG$.
		
		$\diamond$ Suppose there exists $\gamma\in\Gamma$ such that $\tB_\tR(\tM_\rho(\gamma))\neq0$. Hence $\tB_\tR(\tM_\varrho(\gamma))\neq0$, and using Corollary \ref{cor.zd}, we obtain that both $\rho(\Gamma)$ and $\varrho(\Gamma)$ are Zariski dense inside $\GV$.
		
		We use Lemma \ref{lem.alg} and Remark \ref{rem.chired} to observe that ${f}(\varrho(\gamma),\rho(\gamma))=0$ for all $\gamma\in\Gamma$. Hence $\sX$, the Zariski closure of $(\varrho,\rho)(\Gamma)$ inside $\GV\times\GV$, is a subvariety of $\sZ({f})$ and using Lemma \ref{lem.neq} we obtain that $\sD_\iota\not\subset\sX$ for all $\iota$ as mentioned in Notation \ref{not.iota}. Now, using Proposition \ref{prop.affauto}, we deduce that there exists a continuous automorphism $\sigma:\GV\to\GV$ such that $\sigma\circ\rho=\varrho$. Finally, using Proposition \ref{prop.contaut}, we conclude that there exists $(\tA,Y)\in\mathsf{GL}(\sV)\ltimes\sV$ such that $\tA$ normalizes $\tR(\sG)$ and $\sigma(g,X)=(\tA,Y)(\tR_g,X)(\tA,Y)^{-1}$.
		
		$\diamond$ Finally, if $\tL_\rho=\tL_\varrho$, then, using $\sigma(g,X)=(\tA,Y)(\tR_g,X)(\tA,Y)^{-1}$, we obtain that $\tA\tR_g=\tR_g\tA$ for all $g\in\sG$. Hence, $\tA$ is in the centralizer of $\tR(\sG)$ and our result follows.
	\end{proof}
	
	\begin{remark}
		We note that, by Schur's lemma, $\tA$ centralizes $\tR(\sG)$ means in fact, that $\tA$ is scalar (and so is equal to $\pm I$).
	\end{remark}
	
	\begin{corollary}\label{cor.maintech}(refer to Theorem \ref{thm.normainintro})
		Let $\tR$ be a self-contragredient representation and let $\tM_\rho(\gamma)=\tM_\varrho(\gamma)$ for all non identity $\gamma\in\Gamma$. Then either of the following holds:
		\begin{enumerate}
			\item both $\rho(\Gamma)$ and $\varrho(\Gamma)$ are Zariski dense inside some conjugates of $\sG$,
			\item there exists a continuous automorphism $\sigma:\GV\to\GV$ such that $\sigma\circ\rho=\varrho$ and $\sigma$ is conjugation by an element $(\tA,Y)\in\mathsf{GL}(\sV)\ltimes\sV$ such that $\tA$ normalizes $\tR(\sG)$.
		\end{enumerate}
	\end{corollary}
	\begin{proof}
		As $\tM_\rho(\gamma)=\tM_\varrho(\gamma)$ for all non identity $\gamma\in\Gamma$, we obtain $\tB_\tR(\tM_\rho(\gamma))=\tB_\tR(\tM_\varrho(\gamma))$ for all non identity $\gamma\in\Gamma$. Hence, using Theorem \ref{thm.normain} we obtain our result.
	\end{proof}

	\section{Applications}\label{sec.appli}
	In this section, we define Margulis--Smilga manifolds, and prove that the Margulis invariant spectra of certain special Margulis--Smilga manifolds determine them uniquely up to conjugacy. Throughout this section we assume that $(\sG,\sV,\tR)$ satisfy Hypothesis \ref{hyp.1} and $\rho, \varrho:\Gamma\to\sG\ltimes_\tR\sV$ satisfy Hypothesis \ref{hyp.2}.
	
	\begin{definition} Let $\Gamma$ be a finitely generated non-abelian free group, $\sG$ be a real split semisimple algebraic group with trivial center and $\rho:\Gamma\to\GV$ be a loxodromic representation whose linear part $\tL(\rho(\Gamma))$ is Zariski dense inside $\sG$. 
		\begin{enumerate}
			\item If $\rho(\Gamma)$ acts properly discontinuously and freely on $\sV$, then $\rho(\Gamma)\backslash\sV$ is called a \emph{Margulis--Smilga manifold}.
			\item Moreover, if $\tR$ is self-contragredient \footnote{If $\rho(\Gamma)\backslash\sV$ is a Margulis--Smilga manifold, then by Kostant--Sullivan \cite{KS}, every element of $\tR(\sG)$ has $1$ as an eigenvalue. As $\tR$ is self-contragredient, it follows that, $\tR$ preserves a non-degenerate symmetric bilinear form of signature $(p,q)$ with $pq\neq0$.}, then a Margulis--Smilga manifold is called a \emph{Margulis--Smilga spacetime}.
			\item In particular, when $\sG=\mathsf{SO}(2n,2n-1)$, $\sV=\R^{4n-1}$ and $\tR$ is the linear action, we call a Margulis--Smilga spacetime as a \emph{Margulis spacetime}.
		\end{enumerate}
	\end{definition}
	
	We note that the study of proper actions of discrete subgroups of affine group goes back to more than a century. It started with Bieberbach \cite{B1,B2}, who showed that any discrete subgroup of $\mathsf{O}(n,\R)\ltimes\R^n$ that acts properly discontinously, freely, and cocompactly on $\R^n$ is virtually a product of infinite cyclic groups. Later, Auslander--Marcus \cite{AM} constructed examples of discrete subgroups of affine transformations, $\mathsf{GL}(n,\R)\ltimes\R^n$ that act properly discontinously, freely, and cocompactly on $\R^n$ but are not virtually a product of infinite cyclic groups. These examples are virtually polycyclic. Later, in a failed attempt Auslander tried to show that any discrete affine group acting properly discontinuously, freely and cocompactly on $\R^n$ is virtually a polycyclic group. The statement was later renamed as the Auslander's conjecture. The conjecture is still open and is known only for dimensions less than seven by works of Fried--Goldman \cite{FG}, Abels--Margulis--Soifer \cite{AMS2} and Tomanov \cite{Tom}. Milnor \cite{Milnor} wondered about the possibility of dropping the assumption of cocompact action from the conjecture. In response to Milnor, Margulis \cite{Margulis1,Margulis2} produced examples of non abelian free subgroups of $\mathsf{SO}(2,1)\ltimes\R^3$ that act properly discontinuously and freely on $\R^3$. The construction of Margulis was later generalized by Abels--Margulis--Soifer \cite{AMS} and Smilga \cite{Smilga, Smilga3, Smilga4} (for a more detailed survey see \cite{DDGS}). More examples of such actions by Coxeter groups, using different techniques, were obtained by Danciger--Gu\' eritaud--Kassel \cite{DGK3}. 
	
	A key object in the detection of proper affine actions is the Margulis invariant. This was first introduced by Margulis and was later generalized in the works of Abels--Margulis--Soifer \cite{AMS,AMS3} and Smilga \cite{Smilga,Smilga3,Smilga4}. The properties of the marked Margulis invariant spectrum of a discrete subgroup of $\GV$ determine whether the action of the subgroup on $\sV$ is proper or not (see the works of Goldman--Labourie--Margulis \cite{GLM}, Danciger--Gu\' eritaud--Kassel \cite{DGK1}, Ghosh--Treib \cite{GT} and Ghosh \cite{Ghosh7} for more details). Later, in the works of Drumm--Goldman \cite{DG}, Charette--Drumm \cite{CD} and Kim \cite{Kim}, isospectral rigidity results for Margulis spacetimes were obtained (see also \cite{Ghosh4}). As loxodromic representations which give rise to Margulis--Smilga manifolds do not admit global fixed points, we use the results proved in the previous sections to generalize these results for Margulis--Smilga spacetimes and Margulis--Smilga manifolds:
	
	\begin{corollary} \label{cor.main}
		Let $\rho$ and $\varrho$ be two Margulis--Smilga manifolds. Then the following hold:
		\begin{enumerate}
			\item If $\rho$ and $\varrho$ are conjugate via some inner automorphism of $\GV$, then they have the same Jordan--Margulis invariant spectrum.
			\item If $\rho$, $\varrho$ have the same Margulis invariant spectrum and $\tL_\rho=\tL_\varrho$, then there exists $\sigma$, an inner automorphism of $\GV$, such that $\rho=\sigma\circ\varrho$.
			\item If $\rho$, $\varrho$ have the same Jordan--Margulis invariant spectrum and also $\mathsf{rank}(\sG)\geq2$, then there exists $\sigma$, an inner automorphism of $\GV$, such that $\rho=\sigma\circ\varrho$.
			\item If $\rho$, $\varrho$ have the same Margulis invariant spectrum and $\tR$ is a self-contragredient representation, then there exists $(\tA,Y)\in\mathsf{GL}(\sV)\ltimes\sV$ such that $\rho=(\tA,Y)\varrho(\tA,Y)^{-1}$. 
		\end{enumerate}
	\end{corollary}
	\begin{proof} 
		We observe that the first point follows from the definition of Jordan--Margulis invariants, the second point follows from Corollary \ref{cor.marlin}, and the third point follows from Corollary \ref{cor.JM}.
		
		Finally, for the fourth point, suppose $\rho$, $\varrho$ are two Margulis--Smilga manifolds with the same Margulis invariant spectrum, and $\tR$ is a self contragredient representation. Hence $\rho$ and $\varrho$, act properly on $\sV$. It follows that for some $\gamma\in\Gamma\setminus\{e\}$, we have $\tM_\rho(\gamma)\neq0$ and $\tM_\varrho(\gamma)\neq0$. Now we use Corollaries \ref{cor.zd} and \ref{cor.maintech} to obtain our result.
	\end{proof}
	
	\begin{corollary}
		Let $\sG$ be a real split simple algebraic Lie group with trivial center, let $\fg$ be its Lie algebra with Killing form $\tB$, let $\tAd:\sG\to\mathsf{GL}(\fg)$ be the adjoint representation, and let $\rho$ and $\varrho$ be two Margulis--Smilga spacetimes. Then the following hold:
		\begin{enumerate}
			\item If $\rho$ and $\varrho$ are conjugate via some inner automorphism of $\sfG$, then they have the same Jordan--Margulis invariant spectrum.
			\item If $\rho$, $\varrho$ have the same Margulis invariant spectrum and $\tL_\rho=\tL_\varrho$, then there exists $\sigma$, an inner automorphism of $\sfG$, such that $\rho=\sigma\circ\varrho$.
			\item If $\rho$, $\varrho$ have the same Margulis invariant spectrum then there exists $(\tA,Y)\in\mathsf{GL}(\fg)\ltimes\fg$ such that $\rho=(\tA,Y)\varrho(\tA,Y)^{-1}$. 
		\end{enumerate}
	\end{corollary}
	\begin{proof}
		We observe that $\fg$ is finite dimensional, and the Killing form $\tB$ is a non-degenerate symmetric bilinear form. As $\sG$ is Zariski-connected and has a trivial center, we obtain that the adjoint representation is faithful. Also, as $\sG$ is simple we obtain that $\tAd$ is irreducible. It follows that the complexification of $\tAd$ is also irreducible, and hence, $\tAd$ is absolutely irreducible. Moreover, by Proposition 4.4.5 of \cite{Sp} we obtain that $\tAd$ is algebraic. We observe that $\fa$ is a zero weight space of $\tAd$ and hence $\tAd$ admits zero as a weight. Also, as the Killing form is not degenerate, we obtain that $\tAd$ is self-contragredient. Hence our result follows from Corollary \ref{cor.main}.
	\end{proof}

	\appendix
	\section{Normal subgroups}
	
	In this section, we prove some results about the normal subgroups of affine groups of the form $\GV$, where $\sG$ is a real split semisimple algebraic Lie group with trivial center acting on a vector space $\sV$ via a faithful irreducible algebraic representation $\tR:\sG\to\mathsf{GL}(\sV)$. We expect that these results are known in the community but we were unable to find an appropriate reference in the literature. 
	
	\begin{lemma}\label{lem.irred}
		Let $\sG$ be a real split semisimple Lie group, let $\sV$ be a finite dimensional vector space with $\dim\sV>1$, let $\tR:\sG\to\mathsf{GL}(\sV)$ be an irreducible algebraic representation, and let $X\in\sV$ with $X\neq0$. Then the additive group generated by $\{\tR_gX\mid g\in\sG\}\subset\sV$ is $\sV$.
	\end{lemma}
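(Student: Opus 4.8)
The plan is to let $H$ denote the additive subgroup of $(\sV,+)$ generated by the orbit $O:=\{\tR_gX\mid g\in\sG\}$ and to prove that $H$ contains an open neighbourhood of $0$; since an open subgroup of the connected topological group $(\sV,+)$ must be the whole group, this forces $H=\sV$. The first thing to stress is what does \emph{not} suffice. Irreducibility shows immediately that $\operatorname{span}_\R(O)$ is a nonzero $\tR(\sG)$-invariant subspace, hence equals $\sV$; but this only exhibits a basis $\tR_{g_1}X,\dots,\tR_{g_n}X$ of $\sV$ inside $H$, i.e.\ a full-rank lattice, which is far from all of $\sV$. To upgrade a lattice to an open set I must exploit the smooth (indeed algebraic) connected structure of the orbit.

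Next I would analyse the orbit map $F\colon\sG\to\sV$, $F(g)=\tR_gX$, which is smooth, and record that $\operatorname{im}(dF_g)=\tdR(\fg)\,\tR_gX$, the tangent space to $O$ at $\tR_gX$. Consider the subspace $T:=\operatorname{span}_\R\{\tdR(Y)\tR_gX\mid Y\in\fg,\ g\in\sG\}$. From the identity $\tR_h\tdR(Y)\tR_gX=\tdR(\tAd_hY)\tR_{hg}X$ one sees that $T$ is $\tR(\sG)$-invariant. It is also nonzero: if $\tdR(\fg)X=0$ then, $\sG$ being connected, $X$ would be fixed by all of $\tR(\sG)$, so $\R X$ would be an invariant line, contradicting irreducibility together with $\dim\sV>1$. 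Hence by irreducibility $T=\sV$.

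Finally I would use $T=\sV$ to select elements $g_1,\dots,g_n\in\sG$ (with $n=\dim\sV$) for which $\sum_{i}\tdR(\fg)\tR_{g_i}X=\sV$, and then consider the sum map
\[
\Phi\colon\sG^{\,n}\to\sV,\qquad \Phi(g_1,\dots,g_n)=\sum_{i=1}^{n}\tR_{g_i}X .
\]
Its differential at $(g_1,\dots,g_n)$ has image $\sum_i\operatorname{im}(dF_{g_i})=\sum_i\tdR(\fg)\tR_{g_i}X=\sV$, so $\Phi$ is a submersion there and its image contains an open set $U\subset\sV$. Since every value of $\Phi$ is a finite sum of elements of $O\subset H$, we have $U\subset H$; choosing $u_0\in U$, the open neighbourhood $U-u_0$ of $0$ lies in $H-H=H$. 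Thus $H$ is open and therefore $H=\sV$.

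The crux, and the only genuine obstacle, is precisely this passage from the lattice furnished by irreducibility to an open subset: it forces one to differentiate the orbit and to verify that the tangent directions $\tdR(\fg)\tR_gX$, summed over the orbit, already fill $\sV$. Everything else—invariance of $T$, the no-fixed-vector argument, the openness of submersions, and the open-subgroup principle—is routine.
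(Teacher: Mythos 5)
Your proof is correct, but it takes a genuinely different route from the paper's. The paper works algebraically within the split structure: it picks a loxodromic $g=hm\exp(Z)h^{-1}$ with $\tR_gX\neq X$, decomposes $\tR_h^{-1}X$ into weight components, and builds a Vandermonde-style product of operators of the form $2\tR_{\exp(Z)}-\tR_{\exp(t_\nu Z)}$ (which lies in the additive group generated by $\{\tR_g\}$) to isolate a single nonzero weight component $Y_\mu$; varying parameters then puts the entire line $\R Y_\mu$ into the additive group $\mathsf{S}$, after which the additive group generated by $\tR(\sG)\R Y_\mu$ is automatically a linear subspace and irreducibility finishes. You instead argue differential-topologically: the span of the tangent directions $\tdR(\fg)\tR_gX$ is a nonzero invariant subspace (using $\dim\sV>1$ to rule out a fixed line), hence all of $\sV$, so the sum map $\Phi(g_1,\dots,g_n)=\sum_i\tR_{g_i}X$ is a submersion at a suitable point; its image, contained in $H$, then contains an open set, and an additive subgroup of the connected group $(\sV,+)$ containing an open set is everything. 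Each step of yours checks out, including the invariance identity $\tR_h\tdR(Y)\tR_gX=\tdR(\tAd_hY)\tR_{hg}X$ and the open-subgroup principle. Your argument is shorter and more general --- it uses no split hypothesis, no weight spaces, and no loxodromic elements, only connectedness, smoothness, and real irreducibility --- at the cost of invoking the submersion theorem rather than staying within the explicit algebraic framework the paper uses elsewhere; the paper's version has the minor additional payoff of exhibiting an explicit line $\R Y_\mu$ inside the additive group.
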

	\begin{proof}
		If possible, let us assume that $\tR_gX=X$ for all $g\in\sG$. Then $\tR(\sG)$ fixes the line $\R X$ and $\R X\subsetneq\sV$, a contradiction to the property that the representation $\tR$ is irreducible. 
		
		Hence we can assume that there exists a $g\in\sG$ such that $\tR_gX\neq X$. We use Lemma \ref{lem.ue} and the continuity of the action of $\sG$ to deduce that we can choose $g$ such that $g$ is loxodromic and the dimension of the unit eigenspace of $\tR_g$ is exactly $\dim \sV^0$. Let $m\in\sM$, $Z\in\fa^{++}$ and $h\in\sG$ be such that $g=hm\exp(Z)h^{-1}$. For all $\lambda\in\Omega\cup\{0\}$, let $Y_\lambda\in\sV^\lambda$ be such that \[Y:=\tR_h^{-1}X=Y_0+\sum_{\lambda\in\Omega}Y_\lambda.\]
		Moreover, as $\Omega$ is finite, we can slightly perturb $g$ and make sure that for all $\lambda,\nu\in\Omega$ we have $\lambda(Z)\neq\nu(Z)$ whenever $\lambda\neq\nu$. As $gX\neq X$ we obtain that $Y\neq Y_0$ and there exists $\mu\in\Omega$ such that $Y_\mu\neq0$. We observe that $\lambda(Z)\neq0$ for $\lambda\in\Omega$ and choose \[t_\lambda:=\left(1+\frac{\log 2}{\lambda(Z)}\right).\]
		It follows that $\tR_{\exp(t_\lambda Z)}Y_\lambda=2\tR_{\exp(Z)}Y_\lambda$. Indeed,
		\begin{align*}
			\tR_{\exp(t_\lambda Z)}Y_\lambda&=\exp(\lambda(t_\lambda Z))Y_\lambda=\exp(\log 2+\lambda(Z))Y_\lambda\\
			&=2\exp(\lambda(Z))Y_\lambda=2\tR_{\exp(Z)}Y_\lambda.
		\end{align*}
		Therefore, for all $\lambda\in\Omega$, we have $(2\tR_{\exp(Z)}-\tR_{\exp(t_\lambda Z)})Y_\lambda=0$. It follows that for all $\lambda\in(\Omega\cup\{0\})\setminus\{\mu\}$ and
		\[\tR^\mu:=\left(\tR_{\exp(Z)}-\tR_e\right)\prod_{\nu\in\Omega\setminus\{\mu\}}\left(2\tR_{\exp(Z)}-\tR_{\exp(t_\nu Z)}\right),\]
		we have $\tR^\mu Y_\lambda=0$. Therefore, we obtain that $\tR^\mu Y\in\sV^\mu$ and $\tR^\mu Y=\tR^\mu Y_\mu$. Moreover, we observe that
		\begin{align*}
			\tR^\mu Y_\mu&=\left(\tR_{\exp(Z)}-\tR_e\right)\prod_{\nu\in\Omega\setminus\{\mu\}}\left(2\tR_{\exp(Z)}-\tR_{\exp(t_\nu Z)}\right)Y_\mu\\
			&=(\exp(\mu(Z))-1)\prod_{\nu\in\Omega\setminus\{\mu\}}\left(2\exp(\mu(Z))-\exp(t_\nu \mu(Z))\right)Y_\mu.
		\end{align*}
		and $(\exp(\mu(Z))-1)\prod_{\nu\in\Omega\setminus\{\mu\}}\left(2\exp(\mu(Z))-\exp(t_\nu \mu(Z))\right)\neq0$. Hence
		\[\left\{\left(\tR_{\exp(tZ)}-\tR_{\exp(sZ)}\right)\tR^\mu Y\mid t,s\in\R\right\}=\R Y_\mu.\]
		Let $\mathsf{S}$ be the additive group generated by $\{\tR_gX\mid g\in\sG\}\subset\sV$, and hence, $\tR_h^{-1}X\in\mathsf{S}$. We observe that $\tR^\mu$ is inside the additive group generated by the set $\{\tR_g\mid g\in\sG\}\subset\mathfrak{gl}(\sV)$. It follows that $\R Y_\mu\subset\mathsf{S}$. Also, we observe that the additive group generated by $\tR(\sG)\R Y_\mu$ is the same as the vector space generated by $\tR(\sG)\R Y_\mu$. Moreover, the vector space generated by $\tR(\sG)\R Y_\mu$ is invariant under the action of $\tR(\sG)$. Using the irreducibility of the representation $\tR$, we obtain that $\tR(\sG)\R Y_\mu$ generates $\sV$. Therefore, we conclude that $\mathsf{S}=\sV$.
	\end{proof}
	
	\begin{proposition}\label{prop.normal}
		Let $\sG$ be a real split semisimple algebraic Lie group with trivial center, let $\sV$ be a finite dimensional vector space with $\dim\sV>1$. Let $\tR:\sG\to\mathsf{GL}(\sV)$ be a faithful irreducible algebraic representation and let $\sN$ be a normal subgroup of $\GV$. Then $\sN$ is either of the following subgroups: 
		\begin{enumerate}
			\item the trivial group,
			\item $\sG_i\ltimes\sV$, where $\sG_i$ is a normal subgroup of $\sG$.
		\end{enumerate}
	\end{proposition}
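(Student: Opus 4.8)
The plan is to study $\sN$ through its interaction with the translation subgroup $\{e\}\ltimes\sV$ and with the linear projection $\tL$. First I would record the conjugation formula for a pure translation: for any $(g,X)\in\GV$ and $Z\in\sV$, a direct computation using $(g,X)^{-1}=(g^{-1},-\tR_{g^{-1}}X)$ gives
\[(g,X)(e,Z)(g,X)^{-1}=(e,\tR_gZ),\]
which is independent of $X$. Set $\{e\}\ltimes W:=\sN\cap(\{e\}\ltimes\sV)$; since both $\sN$ and $\{e\}\ltimes\sV$ are groups and $(e,Z_1)(e,Z_2)=(e,Z_1+Z_2)$, the set $W$ is an additive subgroup of $\sV$. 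Normality of $\sN$ together with the displayed formula shows that $\tR_gZ\in W$ whenever $Z\in W$, so $W$ is invariant under the additive action of $\tR(\sG)$. By Lemma \ref{lem.irred} any nonzero $\tR(\sG)$-invariant additive subgroup of $\sV$ is already all of $\sV$; hence either $W=\sV$ or $W=\{0\}$, and I would treat these two cases separately.

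If $W=\sV$, then $\{e\}\ltimes\sV=\ker(\tL)\subset\sN$, so $\sN=\tL^{-1}(\tL(\sN))$. Writing $\sG_i:=\tL(\sN)$, the map $\tL$ is a surjective homomorphism and $\sN$ is normal, so $\sG_i$ is a normal subgroup of $\sG$ (conjugate by $(g,0)$ and apply $\tL$). Since $\ker(\tL)\subset\sN$, I then get $\sN=\tL^{-1}(\sG_i)=\sG_i\ltimes\sV$, which is exactly case (2) of the statement.

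The remaining case $W=\{0\}$ is where the faithfulness of $\tR$ enters, and is the only step requiring more than bookkeeping. Here $\tL|_\sN$ is injective, so each $g\in\sG_i:=\tL(\sN)$ determines a unique $X_g\in\sV$ with $(g,X_g)\in\sN$. Conjugating $(g,X_g)$ by a pure translation $(e,Z)$ and using the formula above yields $(g,\,X_g+Z-\tR_gZ)\in\sN$; by injectivity of $\tL|_\sN$ this forces $Z-\tR_gZ=0$ for every $Z\in\sV$, i.e.\ $\tR_g$ is the identity. As $\tR$ is faithful this gives $g=e$, so $\sG_i=\{e\}$ and $\sN\subset\{e\}\ltimes\sV$; combined with $W=\{0\}$ I conclude that $\sN$ is trivial, which is case (1). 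The decisive ingredient throughout is Lemma \ref{lem.irred}, which converts the purely additive invariance of $W$ into genuine linear, hence all-or-nothing, behaviour; granting that, the rest is the elementary semidirect-product algebra carried out above.
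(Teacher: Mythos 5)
Your proof is correct and follows essentially the same route as the paper: both arguments hinge on the conjugation formula $(h,Y)(e,X)(h,Y)^{-1}=(e,\tR_hX)$ together with Lemma \ref{lem.irred} to show that $\sN\cap(\{e\}\ltimes\sV)$ is all-or-nothing, and both use the translation-conjugation identity $(e,Z)(g,X_g)(e,Z)^{-1}=(g,X_g+Z-\tR_gZ)$ plus faithfulness of $\tR$ to rule out a nontrivial section over $\tL(\sN)$. The only difference is organizational (you case-split on $W=\sN\cap(\{e\}\ltimes\sV)$ rather than on $\tL(\sN)$), which does not change the substance.
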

	\begin{proof}
		Let $\sN$ be a nontrivial normal subgroup of $\GV$. Then there exists $(g,X)\in\sN$ with $(g,X)\neq(e,0)$. Moreover, for any $(h,Y)\in\GV$ we observe that $(h,Y)^{-1}=(h^{-1},-\tR_h^{-1}Y)$ and hence
		\[(h,Y)(g,X)(h,Y)^{-1}=(hgh^{-1},Y+\tR_hX-\tR_{hgh^{-1}}Y).\]
		It follows that $(g,X)\in\sN$ if and only if $(hgh^{-1},Y+\tR_hX-\tR_{hgh^{-1}}Y)\in\sN$ for all $h\in\sG$ and $Y\in\sV$. 
		
		Now we consider the linear projection map $\tL:\GV\to\sG$ and observe that for all $h\in\sG$, $h\tL(\sN)h^{-1}\subset\tL(\sN)$. It follows that $\tL(\sN)$ is a normal subgroup of $\sG$. We prove our result in the following two parts:
		
		\begin{enumerate}
			\item $\tL(\sN)$ is trivial: As $\sN$ is nontrivial, in this case, we see that there exists $X\neq0$ such that $(e,X)\in\sN$. Hence, for all $h\in\sG$, we have 
			\[(h,0)(e,X)(h,0)^{-1}=(e,\tR_hX)\in\sN.\]
			As the representation $\tR$ is irreducible using Lemma \ref{lem.irred} we obtain that $(e,Y)\in\sN$ for all $Y\in\sV$. Therefore, we deduce that $\sN=\{e\}\ltimes\sV$.
			\item $\tL(\sN)$ is a nontrivial normal subgroup of $\sG$: In this case also we see that there exists $X\neq0$ such that $(e,X)\in\sN$. Indeed, if not then $\sN\cap\left(\{e\}\ltimes\sV\right)=\{(e,0)\}$ and hence
			\[\left.\tL\right|_{\sN}:\sN\to\sG\]
			is an isomorphism onto $\tL(\sN)$. It follows that for all $g\in\tL(\sN)$, there exists $X_g\in\sV$ such that $X_{gh}=X_g+\tR_gX_h$, and 
			\[\sN=\{(g,X_g)\mid g\in\tL(\sN)\}.\] 
			Since $\sN$ is normal inside $\GV$, for all $Y\in\sV$, we have
			\[(e,Y)(g,X_g)(e,Y)^{-1}=(g,Y+X_g-\tR_{g}Y)\in\sN.\]
			Hence $Y+X_g-\tR_{g}Y=X_g$ for all $Y\in\sV$, a contradiction. Therefore, there exists $X\neq0$ such that $(e,X)\in\sN$. Hence for all $h\in\sG$ we have 
			\[(h,0)(e,X)(h,0)^{-1}=(e,\tR_hX)\in\sN.\]
			As the representation $\tR$ is irreducible, using Lemma \ref{lem.irred}, we obtain that $(e,Y)\in\sN$ for all $Y\in\sV$ and we deduce that $\tL(\sN)\ltimes\sV\subset\sN$. Also, $\sN\subset\tL^{-1}(\tL(\sN))=\tL(\sN)\ltimes\sV$. It follows that $\tL(\sN)\ltimes\sV=\sN$
		\end{enumerate}
		Therefore, the only nontrivial normal subgroups of $\GV$ are of the form $\sG_i\ltimes\sV$ where $\sG_i$ is a normal subgroup of $\sG$.
	\end{proof}

	\bibliography{Library.bib}
	\bibliographystyle{alpha}
	
\end{document}